\documentclass[11pt]{amsart}
\usepackage{epsfig, amsmath, amssymb,latexsym,color,esint,amscd}
\usepackage{hyperref}

\setlength{\voffset}{-.7truein}
\setlength{\textheight}{8.8truein}
\setlength{\textwidth}{6.05truein}
\setlength{\hoffset}{-.7truein}

\newcommand{\N}{\mathbb{N}}
\newcommand{\R}{\mathbb{R}}

\newcommand{\cH}{\mathbb{H}}
\newcommand{\bH}{\mathcal{H}}
\newcommand{\C}{\mathbb{C}}

\newcommand{\M}{\mathcal{M}}
\newcommand{\X}{\mathcal{X}}

\newcommand{\lp}{\left(}
\newcommand{\rp}{\right)}

\newtheorem{Thm}{Theorem}[section]
\newtheorem{Lem}[Thm]{Lemma}

\newtheorem{Cor}[Thm]{Corollary}
\newtheorem{Prop}[Thm]{Proposition}
\newtheorem{Rem}[Thm]{Remark}

\numberwithin{equation}{section}

\DeclareMathOperator{\im}{Im}

\DeclareMathOperator{\Ric}{Ric}
\DeclareMathOperator{\Res}{Res}
\DeclareMathOperator{\dimension}{dim}

\newcommand{\Hei}{\mathbb{H}}

\newcommand\mpar[1]{\marginpar{\tiny \color{red} #1}}

\begin{document}

\title[An extension problem for the CR fractional Laplacian] {An extension problem for the CR fractional Laplacian}


\author[R. L. Frank]{Rupert L. Frank}
\address{R.F.: Mathematics 235-37, Caltech, Pasadena, CA 91125}
\email{rlfrank@caltech.edu}

\author[M.d.M. Gonzalez]{Mar\'{i}a de Mar Gonz\'{a}lez}

\address{M.G.: Departament de Matem\`{a}tica
Aplicada~I, Universitat Polit\`{e}cnica de Catalunya, Av. Diagonal 647,
08028 Barcelona, Spain}

\email{mar.gonzalez@upc.edu}

\author[D. D. Monticelli]{Dario D. Monticelli}

\address{D.M.: Dipartimento Matematica `F. Enriques', Universit\`{a} degli Studi di Milano, Via Saldini 50, 20133, Milano, Italy}

\email{dario.monticelli@gmail.com}

\author[J. Tan]{Jinggang Tan}

\address{J.T.: Departamento de  Matem\'atica,
Universidad T\'{e}cnica Federico Santa Mar\'{i}a,  Avda. Espa\~na 1680,
Valpara\'{\i}so, Chile}

\email{jinggang.tan@usm.cl}

\date{}
\maketitle

\begin{abstract}
We show that the conformally invariant fractional powers of the
sub-Laplacian on the Heisenberg group are given in terms of the scattering
operator for an extension problem to the Siegel upper halfspace.
Remarkably, this extension problem is different from the one studied, among others, by
Caffarelli and Silvestre.
\end{abstract}


\section{Introduction and statement of results}

There has been a lot of recent interest in the study of CR
manifolds, on one hand because of their puzzling geometry and, on
the other hand, because they serve as abstract models of real
submanifolds of complex manifolds. In particular, orientable CR
manifolds of hypersurface type which are strictly pseudoconvex have
been the subject of many flourishing studies, due also to the many
parallels between their geometry and conformal geometry of
Riemannian manifolds. In this context, the Heisenberg group plays
the same r\^{o}le as $\R^n$ in conformal geometry. Indeed, Folland
and Stein \cite{Folland-Stein:Heisenberg} constructed normal
coordinates which show how the Heisenberg group closely approximates
the pseudohermitian structure of a general orientable strictly
pseudoconvex CR manifold.

The Heisenberg group $\mathbb{H}^n$ arises also in the description
of $n$--dimensional quantum mechanical systems. Moreover there is a
rich and fruitful interplay between sub-Riemannian geometry on Carnot
groups (of which $\mathbb{H}^n$ is one of the most interesting
examples) and control theory in engineering, and there are many works devoted to understanding harmonic analysis on Lie groups.

In this paper we take a closer look at the construction of CR
covariant operators of fractional order on $\mathbb{H}^n$ and on
orientable CR manifolds of hypersurface type which are strictly
pseudoconvex, and how they may be constructed as the
Dirichlet-to-Neumann operator of a degenerate elliptic equation in
the spirit of Caffarelli and Silvestre \cite{CaSi}.

Fractional CR covariant operators of order $2\gamma$,
$\gamma\in\mathbb R$, may be defined from scattering theory on a
K\"ahler-Einstein manifold $\mathcal X$
\cite{Epstein91,HislopPeterTang,Guillarmou08,Gover-Graham:CR-powers},
they are pseudodifferential operators whose principal symbol agrees
with the pure fractional powers of the CR sub--Laplacian
$(-\Delta_b)^\gamma$ on the boundary $\mathcal M=\partial \mathcal
X$. In the particular case of the Heisenberg group $\mathbb H^n$,
they are simply the intertwining operators on the CR sphere
calculated in \cite{BrFoMo,Johnson-Wallach,Branson-Olafsson-Orsted}
using classical representation theory tools. It is precisely the
article by Branson, Fontana and Morpurgo \cite{BrFoMo} that
underlined the importance and nice PDE properties of these
operators.

There is a rich theory of pseudodifferential operators on the
Heisenberg group (see, for instance, \cite{Taylor:book}). In
particular, the fractional sub-Laplacian is the infinitesimal generators of a Levy process
\cite{Applebaum-Cohen}, although with some particularities because
of the extra direction.

Using functional analysis tools, one may formulate an extension
problem to construct the pure fractional powers of the
sub--Laplacian on Carnot groups (see the related work of
\cite{FraFe} and \cite{Lopez-Sire}). An interesting feature of our
point of view is that one needs to use the underlying complex
hyperbolic geometry instead of the abstract functional analysis
theory for the construction of the CR covariant version of these
operators.

\vspace{0,2cm}

We try to make this paper self-contained in order to make it accessible to analysts and do
not assume any prerequisites on CR or complex geometry. In this regard, Sections \ref{section-preliminaries} and \ref{section-Heisenberg} are a summary of standard concepts that are included here for convenience of the reader.  We emphasize, however, that the reader does not need
these concepts for our main results, Theorems 1.1 and 1.2, which only
concern the case of the Heisenberg group.

The $n$-dimensional Heisenberg group $\mathbb{H}^n$
is the set $\R^n\times\R^n\times\R$ endowed with the group law
$$\hat\xi\circ\xi=\big(\hat x+x,\hat y+y,\hat t+t+2(\langle x,\hat
y\rangle_{\R^n}-\langle \hat x, y\rangle_{\R^n})\big),$$ where
$\xi=(x,y,t)$, $\hat\xi=(\hat x,\hat y,\hat t)$ and
$\langle\cdot,\cdot\rangle_{\R^n}$ is the standard inner product in
$\R^n$. $\mathbb{H}^n$ can be regarded as a smooth sub--Riemannian
manifold; an orthonormal frame is given by the Lie left--invariant
vector fields
\begin{equation}\label{2}
X_j=\frac{\partial}{\partial x_j}+2y_j\frac{\partial}{\partial
t},\quad Y_j=\frac{\partial}{\partial
y_j}-2x_j\frac{\partial}{\partial t}\quad\text{for
}j=1,\ldots,n,\quad T_0=\frac{\partial}{\partial t}.
\end{equation}
Given a smooth function on $\cH^{n}$, we define the sub--Laplacian
of $u$ as:
\begin{equation}\label{Laplacian0}\Delta_{b}u=\frac{1}{2}\sum_{j=1}^n\left[X^2_j+Y^2_j\right]u.
\end{equation}
Note that we have replaced the usual factor $1/4$ in front by a $1/2$, which will be more convenient for our purposes.

Introducing complex coordinates $z=x+iy\in\mathbb{C}^n$, the
Heisenberg group may be identified with the boundary
of the Siegel domain $\Omega_{n+1}\subset \mathbb C^{n+1}$, which is
defined as
$$\Omega_{n+1}:=\left\{\zeta=(z_1,\ldots,z_n,z_{n+1})=(z,z_{n+1})\in\C^{n}\times\C
\,|\,q(\zeta)>0\right\}$$ with
\begin{equation}\label{1}
q(\zeta)=\im z_{n+1}-\sum_{j=1}^n|z_j|^2,
\end{equation}
through the map
$(z,t)\in\mathbb{H}^n\mapsto(z,t+i|z|^2)\in\partial\Omega_{n+1}$.
Introducing the coordinates $(z,t,q)\in\C^n\times\R\times(0,\infty)$
on $\Omega_{n+1}$, the Siegel domain is a K\"ahler-Einstein manifold
with K\"ahler form
\begin{equation}\label{Kahler2}
\omega_{+}=-\frac{i}{2}\partial\bar\partial \log
q=\frac{i}{2}\left(\frac{\tfrac{1}{4}dq^2+\theta^{2}}{q^2}+\frac{\delta_{\alpha
\bar{\beta}}\theta^\alpha\wedge\theta^{\bar \beta}}{q}\right),
\end{equation}
where $\theta^\alpha=dz_\alpha$,
$\theta^{\bar{\beta}}=d\bar{z}_\beta$ for $\alpha,\beta=1,\ldots,n$
and where $\theta$ is given by
\begin{equation}\label{theta-introduction}
\theta=\tfrac{1}{2}\big[dt+2\sum_{\alpha=1}^n\left(x_\alpha
dy_\alpha-y_\alpha
dx_\alpha\right)\big]=\tfrac{1}{2}[dt+i\sum_{\alpha=1}^n\left(z_\alpha
d\bar{z}_\alpha-\bar{z}_\alpha dz_\alpha\right)].
\end{equation}
If, instead, one writes the K\"ahler metric for the defining
function $\rho$ with $q=\rho^2/2$, we have
\begin{equation}\label{metric0}
g^{+}=\frac{1}{2}\left(\frac{d\rho^{2}}{\rho^{2}}+\frac{2\delta_{\alpha\bar{\beta}}\theta^{\alpha}\otimes
\theta^{\bar{\beta}}}{\rho^{2}}+ \frac{4\theta^{2}}{\rho^{4}}
\right).\end{equation} In particular, $\Omega_{n+1}$ may be
identified with the complex hyperbolic space $\mathcal H_{\mathbb
C}^m$. Here, and for the rest of the paper, we set
$$m+1.$$

The boundary manifold $\mathcal M=\partial\Omega_{n+1}=\{q=0\}$
inherits a natural CR structure from the complex structure of the
ambient manifold given by the 1-form \eqref{theta-introduction},
that is precisely the contact form that characterizes the Heisenberg
group $\mathbb H^n$ as a sub--Riemannian (CR) manifold, in which the
CR structure is given by the bundle
$\mathcal{H}(\mathbb{H}^n)=\text{span}_{\C}\langle
Z_1,\ldots,Z_n\rangle$, with $Z_j=X_j+iY_j$ and $X_j,Y_j$ defined in
\eqref{2} for $j=1,\ldots,n$. Consequently the Levi distribution on
$\mathbb{H}^n$ is given by $H(\mathbb{H}^n)=\text{span}_\R\langle
X_1,\ldots X_n,Y_1,\ldots,Y_n\rangle$ and the associated
characteristic direction is $$T=2T_0=2\frac{\partial}{\partial t}.$$
In particular, the associated Laplace-Beltrami operator with this
choice of $\theta$ is \eqref{Laplacian0}; this is the reason for our
normalization constant $1/2$.

More generally, one could consider $\mathcal X^{n+1}$ a complex
manifold with strictly pseudoconvex boundary $\mathcal M$ carrying
an approximate K\"ahler-Einstein metric. Then $\mathcal M$ inherits
a CR structure as explained in Section \ref{section-preliminaries}.
An even more general setting would be to take an asymptotically complex hyperbolic (ACH) manifold
$\mathcal X^{m+1}$ with boundary $\mathcal M$. An ACH manifold is
endowed with a metric $g^+$ that  behaves asymptotical like \eqref{metric0} near $\mathcal M$.

In the first setting, scattering theory tells us that, fixed a defining function $q$, for $s\in\mathbb
C$, $\Re(s)\geq \frac{m}{2}$, and except for a set of exceptional
values, given $f$ smooth on $\mathcal M$, the eigenvalue equation
\begin{equation*}
-\Delta_{g^+} u -s(m-s)u=0\qquad\text{in }\mathcal X,
\end{equation*}
has a solution $u$ with the expansion
\begin{equation*}\left\{\begin{split}
&u=q^{(m-s)}F+q^{s} G, \quad \mbox{for some}\quad F,G\in\mathcal C^\infty(\bar\X),\\
& F|_{\mathcal M}=f.
\end{split}\right.\end{equation*}
The scattering operator is defined as
$$S(s): \mathcal C^\infty(\mathcal M) \to \mathcal C^\infty(\mathcal M)$$
by
$$S(s)f:=G|_{\mathcal M}.$$
It can be shown that $S(s)$ is a meromorphic family of pseudodifferential
operators in the $\Theta$-calculus of \cite{Epstein91} on $\mathcal
M$ of order $2(2s-m)$, and it is self-adjoint when $s$ is real.

For $\gamma\in(0,m)\backslash\mathbb N$, we set
$s=\frac{m+\gamma}{2}$. We define the CR fractional sub--Laplacian
on $(\mathcal M,[\theta])$ by
\begin{equation}\label{operators}P_\gamma^\theta f=c_\gamma S(s)f,\end{equation}
for a constant
\begin{equation}\label{constant}
c_\gamma=2^\gamma\frac{\Gamma(\gamma)}{\Gamma(-\gamma)}.
\end{equation}
Note that $c_\gamma<0$ for $0<\gamma<1$. It is proven then that $P_\gamma^\theta$ is a pseudodifferential operator of order
$2\gamma$, whose principal symbol is given by \eqref{symbol} below.
The main property of the operator is its CR covariance.
Indeed, given another conformal representative $\hat
\theta=w^{\frac{2}{m-\gamma}}\theta$ which identifies the CR
structure of $\mathcal{M}$, the corresponding operator is given
by
\begin{equation}\label{CRcovariance}
P_\gamma^{\hat\theta}(\cdot)=w^{-\frac{m+\gamma}{m-\gamma}}P_\gamma^\theta(w\,\cdot).\end{equation}
Then one may define the fractional $Q$-curvature as
$$Q_\gamma^\theta=P_\gamma^\theta(1),$$
which has interesting covariant properties.

For integer powers $\gamma\in\mathbb N$, one may still define the
operators by taking residues of the scattering operator. In
particular, for $\gamma=1$, one obtains the CR Yamabe operator of
Jerison-Lee \cite{Jerison-Lee:Yamabe}
$$P_1^\theta=-\Delta_b+\frac{n}{2(n+1)}R_\theta,$$
where $\Delta_b$ is the sub--Laplacian on $(\M,\theta)$ and
$R_\theta$ the Webster curvature. In the Heisenberg group case, with
the 1-form $\theta$ introduced in \eqref{theta-introduction}, we can
write explicitly
$$P_1^\theta=-\Delta_b, \quad P_2^\theta= (\Delta_b)^2+T^2,\quad
P_k^\theta=\prod_{l=1}^k (-\Delta_b+i(k+1-2l)T).$$ These are
precisely the GJMS \cite{Graham-Jenne-Mason-Sparling} operators in
the CR case from \cite{Gover-Graham:CR-powers}.

Our first theorem is the precise statement of the extension problem
on the Heisenberg group $(\mathbb H^n,[\theta])$ for $\theta$ given
by \eqref{theta-introduction}. This relation allows to treat the CR
fractional sub--Laplacian as a boundary operator and, in particular,
one recovers the formula for CR covariant operators on the
Heisenberg group from \cite{BrFoMo}.

\begin{Thm}\label{thm1}
Let $\gamma\in (0,1)$, $a=1-2\gamma$.
For each $f\in \mathcal C_0^\infty(\mathbb H^n)$, there exists a unique solution $U:=\mathcal E_\gamma f$ for the extension problem
\begin{equation}\label{equation-CafSil}
\left\{\begin{split}
\partial_{\rho\rho} U+a\rho^{-1}\partial_\rho U +\rho^2\partial_{tt}U+\Delta_b U&=0,\quad \mbox{in }\Omega_{n+1}\simeq\mathbb H^n\times\mathbb R^+,\\
U&=f, \quad\mbox{on
}\partial\Omega_{n+1}=\{\rho=0\}\simeq\mathbb{H}^n.
\end{split}\right.
\end{equation}
Moreover,
\begin{equation}\label{7}
P_\gamma^\theta f=\frac{c_\gamma}{\gamma2^{1-\gamma}}\lim_{\rho\to
0} \rho^a
\partial_\rho U.
\end{equation}
As a consequence, one recovers the symbol of the operator
\begin{equation}\label{symbol}
 P_\gamma^\theta = (2 |T|)^\gamma \ \frac{\Gamma\left( \frac{1+\gamma}{2} + \frac{-\Delta_b}{2|T|}\right)}{\Gamma\left( \frac{1-\gamma}{2} + \frac{-\Delta_b}{2|T|}\right)} \,.\end{equation}
\end{Thm}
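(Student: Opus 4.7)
The plan is to view the extension equation \eqref{equation-CafSil} as the scattering eigenvalue equation for $-\Delta_{g^+}$ on the Siegel domain (equipped with the complex hyperbolic metric \eqref{metric0}), conjugated by the leading Dirichlet indicial factor, and then to read off both existence/uniqueness and \eqref{7} from the known structure of the scattering expansion. A parallel analysis on the Fourier side then produces the explicit symbol \eqref{symbol}.

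Concretely, one first computes $\Delta_{g^+}$ in the coordinates $(\rho,z,t)$ using the $g^+$-orthonormal frame obtained by rescaling $\{\partial_\rho,X_\alpha,Y_\alpha,T_0\}$ by appropriate powers of $\rho$. The ansatz $u=\rho^{m-\gamma}U$ absorbs the leading Dirichlet branch $q^{m-s}=2^{-(m-\gamma)/2}\rho^{m-\gamma}$ of the scattering expansion; inserted into $-\Delta_{g^+}u-s(m-s)u=0$ with $s=(m+\gamma)/2$, the indicial equation (whose roots are precisely $m\pm\gamma$) kills the $\rho^{-2}U$ term, and after dividing by the coefficient of $\partial_{\rho\rho}$ what remains is exactly \eqref{equation-CafSil} with $a=1-2\gamma$. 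Existence of $U$ then follows from existence of the scattering solution (via the Poisson kernel of $-\Delta_{g^+}-s(m-s)$), while uniqueness follows from coercivity of the natural weighted Dirichlet form
\[
\int_{\Omega_{n+1}}\rho^{a}\bigl(|\partial_\rho U|^{2}+\rho^{2}|\partial_t U|^{2}+|\nabla_b U|^{2}\bigr)\,d\rho\,dV_{\mathbb H^n}
\]
on zero-trace, suitably decaying $U$. For \eqref{7}, the scattering expansion $u=q^{m-s}F+q^{s}G+\cdots$ with $F|_{\mathcal M}=f$ and $G|_{\mathcal M}=S(s)f$, combined with $q=\rho^{2}/2$, translates to
\[
U=f+2^{-\gamma}S(s)f\,\rho^{2\gamma}+\cdots,\qquad \lim_{\rho\to 0}\rho^{a}\partial_\rho U=\gamma\,2^{1-\gamma}S(s)f,
\]
and combining with $P_\gamma^\theta f=c_\gamma S(s)f$ from \eqref{operators} produces \eqref{7} with the stated prefactor $c_\gamma/(\gamma\,2^{1-\gamma})$.

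For the symbol \eqref{symbol}, I would take the partial Fourier transform in $t$ and further decompose into Hermite levels for the harmonic oscillator realization of $-\Delta_b$ on $\mathbb H^n$, whose spectrum in the central character $\tau$ is $\{(2k+n)|\tau|:k\ge 0\}$. The extension equation becomes the radial ODE
\[
U''(\rho)+\frac{a}{\rho}U'(\rho)-\tau^{2}\rho^{2}U(\rho)-(2k+n)|\tau|\,U(\rho)=0,
\]
and the substitution $U(\rho)=e^{-\rho^{2}|\tau|/2}W(\rho^{2}|\tau|)$ converts it to Kummer's confluent hypergeometric equation in $x=\rho^{2}|\tau|$ with parameters $c=1-\gamma$ and $b=(1-\gamma+k+n/2)/2$. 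Its unique solution decaying as $\rho\to\infty$ is the Tricomi function $\mathcal U(b,c,\cdot)$, and the standard connection formula identifies the coefficients of $\rho^{0}$ and $\rho^{2\gamma}$ in the small-$\rho$ expansion of $U$ through $\Gamma(\gamma)/\Gamma(b-c+1)$ and $\Gamma(-\gamma)/\Gamma(b)$ respectively. Substituting into \eqref{7} and using $c_\gamma=2^{\gamma}\Gamma(\gamma)/\Gamma(-\gamma)$ together with $-\Delta_b/(2|T|)=(2k+n)/4$ on this eigenspace (note $|T|=2|\tau|$ from the normalization $T=2\partial_t$) yields exactly \eqref{symbol}.

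\textbf{Main obstacle.} The delicate point throughout is the bookkeeping of multiplicative constants: the factors of $2$ arising from $q=\rho^{2}/2$, the normalization $c_\gamma$, the factor $2$ in $T=2T_0$, and the Tricomi/Kummer normalizations must all align to produce the explicit constants in \eqref{7} and \eqref{symbol}. A useful sanity check is to restrict to $t$-independent data $f$ (equivalently $\tau=0$), where \eqref{equation-CafSil} degenerates to the classical Caffarelli--Silvestre extension on $\mathbb R^{2n}$ for $(-\Delta)^{\gamma}$, recovering the known identity and providing a nontrivial verification of the constants.
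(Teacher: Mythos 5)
Your proposal is correct and follows essentially the same route as the paper: conjugate the scattering equation by $q^{m-s}$ (equivalently $\rho^{m-\gamma}$) to obtain \eqref{equation-CafSil}, read \eqref{7} off the expansion $U=F+2^{-\gamma}\rho^{2\gamma}G+\cdots$ with $G|_{\rho=0}=S(s)f$, and derive \eqref{symbol} by reducing to Kummer's equation on each Fourier/Hermite mode and using the Tricomi connection formula, which is precisely the paper's group--Fourier--transform argument. The only caveat is a normalization slip: with the paper's convention $\Delta_b=\tfrac12\sum_j(X_j^2+Y_j^2)$ the Fourier-side eigenvalues of $-\Delta_b$ at central character $\tau$ are $2(2k+n)|\tau|$ rather than $(2k+n)|\tau|$, so your mode ODE and your identification $-\Delta_b/(2|T|)=(2k+n)/4$ are each off by the same factor of $2$; these cancel and \eqref{symbol} still comes out correctly, but as written the intermediate ODE is not the exact transform of \eqref{equation-CafSil}.
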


Uniqueness is understood in the natural corresponding Sobolev space, see
\eqref{6} below.

The extension problem \eqref{equation-CafSil} is
similar to the extension considered by  Caffarelli and Silvestre \cite{CaSi} for the construction of the fractional Laplacian in the Euclidean case, but surprisingly with the additional term in the $t$-direction  $\rho^2\partial_{tt} U$
that appears when one considers the CR sub--Laplacian.

Although here we have concentrated on the Heisenberg group as the
boundary of the Siegel domain, on more general ACH manifolds we have
the same type of results, although lower order terms appear in the
extension problem \eqref{equation-CafSil}. This is exactly what
happens in the real case (see \cite{Chang-Gon}), and precisely those
lower order terms carry a lot of  geometric information.

The main idea in the proof of Theorem \ref{thm1} is to use the group
Fourier transform on the Heisenberg group in order to follow the
arguments used in the real case by \cite{CaSi}. Their idea of
reducing the equation to an ODE in  $\rho$ still holds, although we
need to take care of the extra direction $\partial_t$. Moreover, the
Plancherel identity for the Heisenberg Fourier transform allows to
prove an energy identity that yields a sharp trace Sobolev
embedding.

Before we state our second theorem precisely, we need to introduce
the following notions. Set
\begin{equation}\label{6}
\mathcal A_\gamma[U] = \int_{\Omega_{n+1}} \rho^a\bigg[
|\partial_\rho U|^2 + \rho^{2} |\partial_t U|^2 + \tfrac{1}{2}
\sum_{j=1}^n \left( |X_j U|^2 + |Y_j U|^2 \right) \bigg] \, dx \,dy
\,dt \,d\rho
\end{equation}
for functions $U$ on $\overline{\Omega}_{n+1}\simeq \mathbb
H^n\times[0,\infty)$; this is the weighted Dirichlet energy in the
extension. We define the space $\dot{H}^{1,\gamma}(\Omega_{n+1})$ as
the completion of $\mathcal C_0^\infty(\overline{\Omega}_{n+1})$
with respect to $\mathcal A_\gamma^{1/2}$.

Similarly, we denote by $\dot S^\gamma(\Hei^n)$ the closure of
$\mathcal C_0^\infty(\Hei^n)$ with respect to the quadratic form
$a_\gamma$ associated to the symbol \eqref{symbol} through Fourier
transform, which is defined by
\begin{equation}\label{5}
a_\gamma[f]=\frac{1}{2^\gamma}\int_{\Hei^n}f P^\theta_\gamma f\,
dxdydt
\end{equation}
see also Section \ref{section-trace}. These are the fractional
analogues of the Sobolev spaces on $\mathbb H^n$ introduced by
Folland and Stein \cite{Folland-Stein:Heisenberg}.

\begin{Thm}\label{thm2}
Let $\gamma\in(0,1)$. Then there exists a unique linear bounded
operator $\mathcal{T}:\dot{H}^{1,\gamma}(\Omega_{n+1}) \to \dot
S^\gamma(\Hei^n)$ such that $\mathcal{T}U(\cdot)=U(\cdot,0)$ for all
$U\in \mathcal C_0^\infty(\overline{\Omega_{n+1}})$. Moreover, for
any $U\in \dot{H}^{1,\gamma}(\Omega_{n+1})$ one has
$$
\mathcal A_\gamma[U] \geq2^{1-\gamma} \gamma
\frac{\Gamma(1-\gamma)}{\Gamma(1+\gamma)}\ a_\gamma[\mathcal{T}U]
\,.
$$
Equality is attained if and only if $U=\mathcal E_\gamma f$ for some $f\in \dot S^\gamma(\Hei^n)$.
\end{Thm}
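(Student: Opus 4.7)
The plan is to show that, for $f\in\mathcal C_0^\infty(\mathbb H^n)$, the extension $U=\mathcal E_\gamma f$ constructed in Theorem~\ref{thm1} minimizes the quadratic form $\mathcal A_\gamma$ among all smooth admissible extensions of $f$, and that the minimum value is exactly $2^{1-\gamma}\gamma\,\Gamma(1-\gamma)/\Gamma(1+\gamma) \cdot a_\gamma[f]$. Once this is established on the core $\mathcal C_0^\infty(\overline{\Omega}_{n+1})$, the resulting a priori bound extends the trace map $U\mapsto U(\cdot,0)$ continuously to all of $\dot H^{1,\gamma}(\Omega_{n+1})$, and identifies equality precisely with the extensions produced by $\mathcal E_\gamma$.

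First I would derive an energy identity for $U=\mathcal E_\gamma f$. Testing the equation in \eqref{equation-CafSil} against $U$ on $\Omega_{n+1}$ with the measure $dxdydtd\rho$, using the conservative form $\rho^a(\partial_{\rho\rho}U+a\rho^{-1}\partial_\rho U)=\partial_\rho(\rho^a\partial_\rho U)$, and integrating by parts tangentially in $t$ and in the Heisenberg vector fields $X_j,Y_j$, the bulk contributions reassemble into $-\mathcal A_\gamma[U]$ while the only nontrivial boundary piece sits at $\rho=0$ and equals $-\int_{\mathbb H^n} f\cdot\lim_{\rho\to 0^+}\rho^a\partial_\rho U\,dxdydt$. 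By \eqref{7} this equals $-(\gamma 2^{1-\gamma}/c_\gamma)\int f P_\gamma^\theta f$, and plugging in \eqref{constant} together with $\Gamma(1-\gamma)=-\gamma\Gamma(-\gamma)$ and $\Gamma(1+\gamma)=\gamma\Gamma(\gamma)$ produces
$$
\mathcal A_\gamma[\mathcal E_\gamma f]=2^{1-\gamma}\gamma\frac{\Gamma(1-\gamma)}{\Gamma(1+\gamma)}\,a_\gamma[f].
$$
For an arbitrary $U\in\mathcal C_0^\infty(\overline{\Omega}_{n+1})$ with $U(\cdot,0)=f$, set $V=U-\mathcal E_\gamma f$, which has vanishing trace. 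Polarization gives $\mathcal A_\gamma[U]=\mathcal A_\gamma[\mathcal E_\gamma f]+2\mathcal B_\gamma(\mathcal E_\gamma f,V)+\mathcal A_\gamma[V]$, where $\mathcal B_\gamma$ is the symmetric bilinear form associated to $\mathcal A_\gamma$; the cross term vanishes by the same integration by parts, since the PDE for $\mathcal E_\gamma f$ kills the volume piece and the boundary piece at $\rho=0$ contains the factor $V(\cdot,0)=0$. Hence $\mathcal A_\gamma[U]\geq\mathcal A_\gamma[\mathcal E_\gamma f]$, with equality iff $V\equiv 0$.

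With the inequality established on $\mathcal C_0^\infty(\overline{\Omega}_{n+1})$, the trace operator extends uniquely by density and continuity to a bounded $\mathcal T:\dot H^{1,\gamma}(\Omega_{n+1})\to\dot S^\gamma(\mathbb H^n)$, and both the inequality and the characterization of equality pass to the closure once $\mathcal E_\gamma$ is extended by continuity to $\dot S^\gamma(\mathbb H^n)$. The main obstacle I expect is the rigorous justification of the boundary integration by parts at $\rho=0$: although $\rho^a\partial_\rho\mathcal E_\gamma f$ admits a pointwise limit by \eqref{7}, coupling this limit against $f$ (resp.\ $V$) requires appropriate $L^2$-type convergence near $\rho=0$ together with decay of the test function at spatial infinity and at $\rho\to\infty$ to annihilate the other boundary contributions. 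Securing this delicate interchange should rest on the regularity and decay of $\mathcal E_\gamma f$ already produced by the Heisenberg group Fourier transform representation used to prove Theorem~\ref{thm1}, so the analytic heart of the argument lies in transferring that Fourier-side information to the energy side.
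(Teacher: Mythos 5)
Your proposal is correct in outline, but it follows a genuinely different route from the paper's. The paper first proves the energy identity $\mathcal A_\gamma[\mathcal E_\gamma f]=2^{1-\gamma}\gamma\tfrac{\Gamma(1-\gamma)}{\Gamma(1+\gamma)}a_\gamma[f]$ (Proposition \ref{extequal}) entirely on the Fourier side: Plancherel reduces it to a one-dimensional weighted integral $C_\alpha$ per mode, which is evaluated by integrating the ODE for $\phi_\alpha$ against $x^{-\gamma}\phi_\alpha$ and using the explicit expansion \eqref{asymptotics}. It then proves the trace inequality by \emph{duality} rather than by the Dirichlet principle: for $U\in\mathcal C_0^\infty(\overline{\Omega_{n+1}})$ and arbitrary $g\in\dot S^{-\gamma}(\Hei^n)$, it extends $h=P_{-\gamma}^\theta g$ to $H=\mathcal E_\gamma h$, shows $H(\cdot,\epsilon)\to h$ in $\dot S^\gamma$ and $\epsilon^{1-\gamma}\partial_q H(\cdot,\epsilon)\to-\gamma 2^{-\gamma}\tfrac{\Gamma(1-\gamma)}{\Gamma(1+\gamma)}P_\gamma^\theta h$ in $\dot S^{-\gamma}$ by dominated convergence, rewrites $\int \overline{g}\,U(\cdot,0)$ as the mixed energy of $H$ and $U$, and applies Cauchy--Schwarz together with Proposition \ref{extequal}; duality then gives $a_\gamma[U(\cdot,0)]\le 2^{\gamma-1}\tfrac{\Gamma(1+\gamma)}{\gamma\Gamma(1-\gamma)}\mathcal A_\gamma[U]$ and density concludes. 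Your decomposition $U=\mathcal E_\gamma f+V$ buys a more transparent equality case on the dense class, and your constant bookkeeping via \eqref{7} and \eqref{constant} is right ($-2\gamma c_\gamma^{-1}2^{\gamma}\cdot 2^{-\gamma}\int fP_\gamma^\theta f$ does reduce to the stated constant). What the paper's route buys is exactly the point you flag as delicate: by pairing against the \emph{explicit} extension $H$ and working through $\dot S^{\pm\gamma}$, it never has to justify the cross-term integration by parts against a general, non-compactly-supported competitor $V=U-\mathcal E_\gamma f$, and it makes the statement ``$U(\cdot,0)\in\dot S^\gamma$'' meaningful by duality before any trace operator exists. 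Note also that your closing sentence (``the inequality and the characterization of equality pass to the closure'') conceals real work: to get $\mathcal A_\gamma[U]=\mathcal A_\gamma[\mathcal E_\gamma \mathcal T U]+\mathcal A_\gamma[U-\mathcal E_\gamma \mathcal T U]$ for a general element of the completion you need the pairing identity $\mathcal B_\gamma(\mathcal E_\gamma f,U)=\mathrm{const}\int (P_\gamma^\theta f)\,\mathcal T U$ for \emph{all} admissible $U$ (not only those with vanishing trace), which is essentially the computation carried out in the paper's proof of Theorem \ref{thm2}; so if you fill in your Fourier-side justification to that level of generality, your argument and the paper's converge on the same analytic core.
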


One may use the Cayley transform to translate Theorem \ref{thm2} to
the CR sphere $\mathbb S^{2m-1}$ as the boundary of the complex
Poincar\'e ball $\mathcal H^{m}_{\mathbb C}$. Indeed, the Cayley
transform $\Psi_c$ is a biholomorphism between the unit ball in
$\C^{n + 1}$ and the Siegel domain; when restricted to the
respective boundaries it gives a CR equivalence between $\mathbb
S^{2m-1}$ minus a point and $\mathbb{H}^n\simeq\partial\Omega_{n+1}$
(which is the CR equivalent of the stereographic projection of the
unit sphere on the Euclidean space), see also section
\ref{section-Heisenberg} and \cite{Jerison-Lee:Yamabe}. Combining
Theorem \ref{thm2} with the fractional Sobolev embedding on the
Heisenberg group from \cite{Frank-Lieb} we obtain the following energy
inequality:
\begin{Cor}\label{cor-embedding}There exists a sharp constant $S(n,\gamma)$ such that for every
$U\in \dot H^{1,\gamma}(\mathcal H^m_{\mathbb C})$,
$f=\big(\mathcal{T}(U\circ\Psi_c^{-1})\big)\circ\Psi_c$ defined on
$\mathbb S^{2m-1}$,
\begin{equation}\label{Sobolev}
\|f\|_{L^{2^*}(\mathbb S^{2n+1})}^2\leq S(n,\gamma)\mathcal
A_{\gamma}[U\circ\Psi_c^{-1}]
\end{equation}
for $2^*=\frac{2m}{m-\gamma}$. We have equality in \eqref{Sobolev}
if and only if and $U\circ\Psi_c^{-1}=\mathcal E_\gamma
(f\circ\Psi_c^{-1})$ and $f\circ\Psi_c^{-1}$ comes from a CR
transformation of the CR sphere \eqref{Jacobian}, i.e.
$f\circ\Psi_c^{-1}:\mathbb{H}^n\rightarrow\mathbb{R}$ with
$$f\big(\Psi_c^{-1}(z,t)\big)=\lp\frac{1}{(1+|z|^2)^2+t^2}\rp^\frac{m-\gamma}{2}$$ up to left translations, dilations and multiplication by a
constant.
\end{Cor}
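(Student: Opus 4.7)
The plan is to chain three ingredients: the sharp trace inequality of Theorem~\ref{thm2}, the sharp fractional Sobolev inequality of Frank--Lieb on the Heisenberg group, and the CR equivalence provided by the Cayley transform. Write $\tilde U := U\circ\Psi_c^{-1}$ and $\tilde f := f\circ\Psi_c^{-1}$, so that by construction $\tilde f = \mathcal T \tilde U$. By Theorem~\ref{thm2} applied to $\tilde U\in\dot H^{1,\gamma}(\Omega_{n+1})$ one has
\begin{equation*}
\mathcal A_\gamma[\tilde U]\;\geq\;2^{1-\gamma}\gamma\,\frac{\Gamma(1-\gamma)}{\Gamma(1+\gamma)}\,a_\gamma[\tilde f],
\end{equation*}
with equality iff $\tilde U=\mathcal E_\gamma \tilde f$. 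This reduces the corollary to a sharp Sobolev inequality for $a_\gamma$ on $\mathbb H^n$.

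Second, I would invoke the sharp Hardy--Littlewood--Sobolev/fractional Sobolev inequality of Frank--Lieb on $\mathbb H^n$: there is a sharp constant $C_{\rm FL}(n,\gamma)$ with
\begin{equation*}
\|\tilde f\|_{L^{2^*}(\mathbb H^n)}^2\;\leq\;C_{\rm FL}(n,\gamma)\,\int_{\mathbb H^n}\tilde f\,P_\gamma^\theta \tilde f\,dxdydt\;=\;C_{\rm FL}(n,\gamma)\,2^\gamma\,a_\gamma[\tilde f],
\end{equation*}
using definition \eqref{5} of $a_\gamma$. The equality cases are exactly the functions proportional to $((1+|z|^2)^2+t^2)^{-(m-\gamma)/2}$ and their orbit under the CR automorphism group (left translations, dilations, scalar multiples), which is precisely the family displayed in the statement of the corollary. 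Combining the two displayed inequalities and collecting constants yields a sharp inequality of the form $\|\tilde f\|_{L^{2^*}(\mathbb H^n)}^2\leq S(n,\gamma)\,\mathcal A_\gamma[\tilde U]$, with equality iff both intermediate inequalities are saturated, i.e.\ $\tilde U=\mathcal E_\gamma\tilde f$ and $\tilde f$ is a Heisenberg extremizer.

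Third, I would transport this statement back to the ball and sphere through the Cayley transform. Since $\Psi_c$ is a biholomorphism of the complex hyperbolic ball $\mathcal H^m_{\mathbb C}$ onto $\Omega_{n+1}$, the space $\dot H^{1,\gamma}(\mathcal H^m_{\mathbb C})$ is, by definition, the one whose norm in Siegel coordinates is $\mathcal A_\gamma[\tilde U]$. On the boundary side, $\Psi_c|_{\partial}$ is a CR equivalence between $\mathbb S^{2m-1}$ minus a point and $\mathbb H^n$, and it pulls the standard contact form on the sphere back to $w^{2/(m-\gamma)}\theta$ for an explicit conformal factor $w$ (the CR Jacobian, \eqref{Jacobian}). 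The critical exponent $2^*=2m/(m-\gamma)$ is exactly matched to this conformal weight, so a direct change of variables gives $\|f\|_{L^{2^*}(\mathbb S^{2m-1})}=\|\tilde f\|_{L^{2^*}(\mathbb H^n)}$, and the extremizers on $\mathbb H^n$ pull back to the ones described on the sphere.

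The routine portions are the trace inequality and the Frank--Lieb inequality, both of which are cited as given. The main obstacle — and really the only point that requires care — is bookkeeping of the conformal weights under $\Psi_c$: one has to verify that $\mathcal A_\gamma$ is the correct invariant expression of the energy on the ball (equivalently, that $\mathcal A_\gamma$ is invariant under the Cayley transform in the same sense that $\|f\|_{L^{2^*}}$ is), and that the CR covariance \eqref{CRcovariance} makes $a_\gamma[\tilde f]$ coincide with its sphere counterpart after accounting for $w$. Once the exponent $2^*=\frac{2m}{m-\gamma}$ is identified with the conformal weight in \eqref{CRcovariance}, all Jacobian factors cancel cleanly and the characterization of the equality cases transports from $\mathbb H^n$ to $\mathbb S^{2m-1}$ through the formula $f(\Psi_c^{-1}(z,t))=((1+|z|^2)^2+t^2)^{-(m-\gamma)/2}$, concluding the proof.
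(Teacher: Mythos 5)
Your proposal follows exactly the paper's route: the paper proves the corollary by combining the sharp Frank--Lieb inequality \eqref{Sobolev-inequality} (in its equivalent $\mathbb H^n$ form, whose extremizers are the translates, dilates and constant multiples of $\lp(1+|z|^2)^2+t^2\rp^{-\frac{m-\gamma}{2}}$) with the sharp trace inequality of Theorem \ref{thm2}, transported through the Cayley transform, which is precisely your three-step chain. The only caveat is your claim that a direct change of variables gives $\|f\|_{L^{2^*}(\mathbb S^{2n+1})}=\|f\circ\Psi_c^{-1}\|_{L^{2^*}(\Hei^n)}$: taken literally this needs the conformal weight $|J_c|^{1/2^*}$ from \eqref{Jacobian} built into the correspondence between $f$ and $f\circ\Psi_c^{-1}$, but this is the same conformal-factor bookkeeping the paper itself leaves implicit, so it is not a departure from the paper's argument.
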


The importance of this Corollary is that it constitutes the first
step in the resolution of the fractional CR Yamabe problem (see
\cite{Gonzalez-Qing} and Section \ref{section-Yamabe} for a short
discussion).

In the last part of this paper (Section \ref{Section-further}), we
explore the quaternionic setting and show that both Theorems
\ref{thm1} and \ref{thm2} can be proven in a similar manner  when
one considers the quaternionic Heisenberg group $\mathcal Q^n$ as
the boundary of the quaternionic hyperbolic space $\mathcal
H^m_{\mathbb Q}$. In particular, the extension problem
\eqref{equation-CafSil} is the same one provided one replaces
$\partial_{tt}$ by the Laplacian in the extra three $t$-directions. This
happens because of the underlying Lie group structure and, in
general, this general construction is possible in symmetric spaces
of rank one, which are the real, complex, quaternionic hyperbolic
spaces $\mathcal H^m_{\mathbb R}$, $\mathcal H^m_{\mathbb C}$,
$\mathcal H^m_{\mathbb Q}$, respectively, and the octonionic
hyperbolic plane $\mathcal H^2_{\mathbb O}$.

There are still many open questions in this field. From the PDE
point of view, an extremely interesting problem is to establish a
good elliptic theory for \eqref{equation-CafSil}, which is fourth
order because of the term $\partial_{tt}$. Indeed the vector field
$T_0=\frac{\partial}{\partial t}$ is a second order differential
operator in the CR structure of $\mathbb{H}^n$, as it is obtained
from first order commutators of the vector fields
$\{X_j,Y_j\}_{j=1,\ldots,n}$. See also sections
\ref{section-preliminaries} and \ref{section-Heisenberg}.

From the complex geometry point of view, observe that two defining
functions $\varphi$ and $\psi$ generate the same K\"ahler metric in
$\mathcal X$ if and only if $\varphi = e^F\psi$ for a pluriharmonic
function $F$, i.e. �$\partial\bar\partial F = 0$. If
$\theta_\varphi$ and $\theta_\psi$ are the corresponding
pseudohermitian structures on $\mathcal M=\partial\mathcal{X}$ then
$\theta_\varphi = e^f \theta_\psi$, where $f = F|_{\mathcal M}$. In
this case, Branson's $Q$-curvature (that corresponds to our case
$\gamma=m$, see \eqref{Branson-curv}), vanishes identically and thus
it is not an interesting object when restricting to pluriharmonics.
Recently, \cite{Hirachi:Q-prime} (see also \cite{Case-Yang}) has
constructed new GJMS operators $P'_k$ using the original
\cite{Graham-Jenne-Mason-Sparling} ambient metric construction. It
would be interesting to see those from the scattering and extension
point of view.

Fractional order CR operators on the Heisenberg group were
introduced in R. Graham's thesis
\cite{Graham:thesisI,Graham:thesisII} as obstructions to the
resolution of an eigenvalue problem. His construction works also for
more general operators
$$\mathcal L_\alpha:=\frac{1}{2}\sum_{j=1}^n [X_j^2+Y_j^2]+i\alpha T,\quad \text{for }\alpha\in\mathbb
R,$$ which up to multiplication by a constant are the only linear
differential operators on $\mathbb{H}^n$ which are second order with
respect to the natural dilation structure of the Heisenberg group,
that are invariant with respect to left translations and that are
invariant under the action of unitary transformations of
$z=x+iy\in\C^n$. One could try to understand what type of new
problems appear.

\section{Preliminaries}\label{section-preliminaries}

Most of this section is taken from the well written introduction in
\cite{HislopPeterTang}, but we recall it here for convenience of the
reader. Two good survey references on Heisenberg groups and CR
manifolds are \cite{Beals-Fefferman-Grossman},
\cite{Dragomir-Tomassini}. We also point out the book on complex
hyperbolic geometry \cite{Goldman:book}.

\subsection{CR geometry}

 A CR-manifold is a smooth manifold $\M$ equipped with a distinguished
subbundle $\mathcal{H}$ of the complexified tangent bundle $\C
 T\M=T\M\otimes\C$ such that $[\mathcal{H},\mathcal{H}]\subset\mathcal{H}$, i.e.
$\mathcal{H}$ is closed with respect to the Lie bracket and hence is
formally integrable, and $\mathcal{H}\cap\bar{\mathcal{H}}=\{0\}$,
i.e. $\mathcal{H}$ is almost Lagrangian. The subbundle $\mathcal{H}$
is called a CR structure on the manifold $\M$. An abstract CR manifold will
be called of hypersurface type if $\text{dim}_\R \M=2n+1$ and
$\text{dim}_\C\mathcal{H}$.

The maximal complex (or Levi) distribution on the CR manifold $\M$ is
the real subbundle $H(\M)\subset T\M$ given by
$H(\M)=\text{Re}\{\mathcal{H}\oplus\bar{\mathcal{H}}\}$; it carries a
natural complex structure $J_b:H(\M)\rightarrow H(\M)$ defined by
$$J_b(V+\bar{V})=i(V-\bar{V})\qquad\text{for any }V\in\mathcal{H}.$$

If the CR manifold $\M$ is of hypersurface type and oriented, as we
will always assume throughout the paper, it is possible to associate
to its CR structure $\mathcal{H}$ a one--form $\theta$ which is
globally defined on $\M$ such that
\begin{align}\label{dm1}
\text{Ker}(\theta) =H(\M).
\end{align}
This form is unique up to multiplication by a non-vanishing function
in~$\mathcal C^{\infty}(\M)$. More precisely, any two globally
defined one--forms $\theta,\,\hat\theta$ on $\M$ such that
$\text{Ker}(\hat\theta)=\text{Ker}(\theta) =H(\M)$ are related by
\begin{align}\label{dm2}
\theta=f\hat\theta,
\end{align}
for some nowhere zero smooth function $f$ on $\M$. Any one--form
$\theta$ satisfying \eqref{dm1} is called a pseudohermitian
structure on $\M$.

The Levi form $L_\theta$ associated to a pseudohermitian structure
$\theta$ on $\M$ is given by
\begin{equation}\label{dm2a}L_\theta(V_{1},\bar{V}_{2})=-i(d\theta)(V_{1},\bar{V}_{2}),\qquad\forall\,
V_{1},\,V_{2}\in\mathcal{H},\end{equation}
where the form $d\theta$ is extended to
complex vector fields by $\C$--linearity.
The Levi form changes under a change of the pseudohermitian
structure given by \eqref{dm2} as follows
$$L_{\hat\theta}=fL_\theta.$$

We say that an orientable CR manifold of hypersurface type endowed
with a pseudohermitian structure is strictly pseudoconvex if its
corresponding Levi form is strictly positive definite, while we say
that it is nondegenerate if the corresponding Levi form is
nondegenerate (i.e. if $V_1\in\mathcal{H}$ satisfies
$L_\theta(V_{1},\bar{V}_{2})=0$ for all $V_{2}\in\mathcal{H}$, then
$V_{1}=0$).

Given a nondegenerate pseudohermitian structure $\theta$ on an
oriented CR manifold $\M$ of hypersurface type,
$\psi=\theta\wedge(d\theta)^n$ is a volume form on $\M$. Moreover
there is a unique globally defined nowhere zero vector field $T$
tangent to $\M$ such that
\begin{equation}\label{dm3}
\theta(T)=1,\qquad T \rfloor d\theta=0.
\end{equation}
$T$ is called characteristic direction of $(\M,\theta)$ and one can
easily show that
$$T\M=H(\M)\oplus\R T.$$
Moreover, the relations
\begin{equation}\label{Webstermetric-dm}
g_\theta(V_{1},V_{2})=d\theta(V_{1},J_bV_{2}),\qquad g_\theta(V_{1},T)=0,\qquad
g_\theta(T,T)=1
\end{equation}
for every $V_{1},V_{2}\in H(\M)$, where $T$ is the characteristic vector field
associated to $\theta$, define the Webster metric on $\M$. If the
Levi form $L_\theta$ is strictly positive definite, then $g_\theta$
is a Riemannian metric on $\M$, but $g_\theta$ is not a CR
invariant. Next, for any smooth function $u$ on $\M$ we define
$\nabla_{g_\theta} u$ through the relation
\begin{equation}\label{dm7}
g_\theta(\nabla_{g_\theta} u,V)=du(V)
\end{equation}
for any vector field $V$ on $\M$. If $\pi_b$ is the canonical
projection of the real tangent bundle $T\M$ on the Levi
distribution $H(\M)$, the horizontal gradient of a smooth function
$u$ on $\M$ is given by
\begin{equation}\label{dm8}
\nabla_{b}u=\pi_{b}\nabla_{g_\theta} u.
\end{equation}
Moreover the sub--Laplacian operator is defined by setting
\begin{equation}\label{dm5}
\Delta_b u=\text{div}(\nabla_{b}u)
\end{equation}
for any smooth function $u$ on $\M$, where $\text{div}(V)$ is
defined by
\begin{equation}\label{dm6}
\mathcal{L}_V\psi=\text{div}(V)\psi,
\end{equation}
or equivalently by
$$d(V\rfloor \psi)=\text{div}(V)\psi,$$ for any vector field $V$ on
$\M$, where $\psi=\theta\wedge(d\theta)^n$ and $\mathcal{L}$ denotes the
Lie derivative.

In coordinate notation, let $\{W_{\alpha}\}_{\alpha=1}^{n}$ be a
local frame for $\bH$ and let
$W_{\bar{\alpha}}=\overline{W}_\alpha$. Then the vector fields
$\{W_{\alpha},W_{\bar{\alpha}},T\}$ form a local frame for
${\C}T\M$, whose dual coframe
$\{\theta^{\alpha},\theta^{\bar{\alpha}},\theta\}$ is admissible if
$\theta^{\alpha}(T)=0$ for all $\alpha$. The integrability condition
is equivalent to the condition that $d\theta=d\theta^\alpha$ mod
$\{\theta,\theta^\alpha\}$. Thus  the Levi form is written as
$$L_{\theta}=h_{\alpha\bar{\beta}}\theta^{\alpha}\wedge\theta^{\bar{\beta}},$$
for a  Hermitian matrix--valued function $h_{\alpha\bar{\beta}}$.
Its inverse will be denoted by $h^{\alpha\bar\beta}$. We will use
$h_{\alpha\bar\beta}$ and $h^{\alpha\bar\beta}$ to lower and raise
indexes in the usual way. The horizontal gradient and the
sub--Laplacian on $\mathcal M$ may be calculated as \mpar{check}
\begin{equation}\label{sublaplacian}
\nabla_b u=u^{\bar{\alpha}} W_\alpha+u^\beta W_{\bar{\beta}},\qquad
\Delta_b u= {u_\alpha}^{\alpha}+{u_{\bar\beta}}^{\bar\beta},
\end{equation}
respectively, where $u^{\bar{\alpha}}=h^{\gamma
\bar{\alpha}}W_{\bar{\gamma}}u,\,u^\beta=h^{\beta\bar{\gamma}}W_{\gamma}u$.

We recall that any real hypersurface $\mathcal{M}$ in a complex
manifold $\mathcal{X}$ of complex dimension $m+1$ can be seen as a
CR manifold of hypersurface type of dimension $2n+1$, with the CR
structure naturally induced by the complex structure of the ambient
manifold
$$\mathcal{H} = T ^{1,0}(\mathcal{X})\cap
\mathbb{C}T\mathcal{M},$$ where $T ^{1,0}(\mathcal{X})$ denotes the
bundle of holomorphic vector fields on $\mathcal{X}$.

\subsection{Complex manifolds with CR boundary}

Now suppose that ${\mathcal{X}}$ is a compact complex manifold of
dimension $m+1$ with boundary $\partial {\mathcal{X}}=\M$. As we have just mentioned, the boundary manifold inherits a natural CR-structure from the
ambient manifold. We recall the following facts from
\cite{GrahamLee,Lee-Melrose}, where they give a precise description
of the asymptotic behavior near the boundary.

Let  $\varphi$ be a negative smooth function on ${\mathcal{X}}$. We
say that $\varphi$ is a defining function for $\M$ if  $\varphi<0$
in the interior of $\mathcal X$, $\varphi=0$ on $\M$,
$|d\varphi(p)|\neq 0$ for all $p\in \M$. We further suppose that
$\varphi$ has no critical points in a collar neighborhood
$\mathcal{U}$ of $\M$ so that the level sets
$\M^{\varepsilon}=\varphi^{-1}(-\varepsilon)$ are smooth manifolds
for all $\varepsilon$ sufficiently small.

Associated to the defining function $\varphi$ we define the K\"ahler form
 \begin{equation}\label{Kahler-form0}
  \omega_{+}=-\frac{i}{2}\partial\bar{\partial}\log (-\varphi)=\frac{i}{2}\left(\frac{\partial\bar{\partial}\varphi}{-\varphi}+
  \frac{\partial\varphi\wedge\bar{\partial}\varphi}{\varphi^{2}}\right).
\end{equation}


The manifold $\M^{\varepsilon}$ inherits a natural CR structure from
the complex  structure of the ambient manifold with
$\bH^\varepsilon={\C}T\M^{\varepsilon}\cap T^{1,0}{\mathcal{U}}$.
For a defining function $\varphi$, we define a one--form
\begin{equation}\label{Theta}
\Theta= \frac{i}{2}(\bar{\partial}-\partial)\varphi
\end{equation}
and using the natural embedding map $i_{\varepsilon}:
\M^{\varepsilon}\rightarrow {\mathcal{U}}$, set
$\theta_{\varepsilon}=i_{\varepsilon}^*\Theta$. The contact form
$\theta_{\varepsilon}$ is a pseudohermitian structure for
$\M^{\varepsilon}$. Note that
\begin{equation}\label{dtheta}d\Theta=i\partial\bar\partial\varphi,\end{equation}
and the Levi form on $\mathcal M^\epsilon$ is given by
\begin{equation}\label{Levi0}L_{\theta_{\varepsilon}}=-i d\theta_{\varepsilon}.\end{equation}
We will assume that $L_{\theta_\varepsilon}$ is positive definite
for all $\varepsilon>0$ sufficiently small, so that
$\M^{\varepsilon}$ is strictly pseudoconvex. Moreover, in order to
simplify the notation we will write $\theta$ for
$\theta_{\varepsilon}$, suppressing the $\varepsilon$.

 It is known that
there exists a unique $(1,0)$-vector field $\xi$ on $\mathcal{U}$ such that
$$\partial\varphi (\xi)=1\quad\text{and}\quad \xi\rfloor \partial\bar{\partial}\varphi=r\bar{\partial}\varphi$$
for some $r\in \mathcal C^{\infty}({\mathcal{U}})$.
The function $r$ is called the {\it transverse curvature}.
We decompose $$\xi=\frac{1}{2}(N-iT),$$
 where $N,T$ are real vector fields on $\mathcal{U}$.
Then
\[
d\varphi(N)=2,\quad \theta(N)=0; \quad \theta(T)=1,\quad T\rfloor d\theta=0.
\]
Thus $T$ is the characteristic vector field for each
$M^{\varepsilon}$ and $N$ is normal to $M^{\varepsilon}$.

Let $\{W_{\alpha}\}_{\alpha=1}^n$ be a frame for $\bH^\varepsilon$.
Then, setting $W_ {\bar{\alpha}}=\overline{W}_\alpha$, the vector
fields $\{W_{\alpha},W_{\bar{\alpha}},T\}_{\alpha=1}^n$ form a local
frame for ${\C}T\M^{\varepsilon}$, while
$\{W_{\alpha},W_{\bar{\alpha}}\}_{\alpha=1}^{n+1}$ form a local
frame for ${\C}T{\mathcal{U}}$ by setting
$W_{m}=\xi,W_{\bar{m}}=\bar{\xi}$. If
$\{\theta^{\alpha}\}_{\alpha=1}^n$ is a dual coframe for
$\{W_{\alpha}\}_{\alpha=1}^n$ then, setting $\theta_
{\bar{\alpha}}=\overline{\theta}_\alpha$, the one--forms
$\{\theta^{\alpha},\theta^{\bar{\alpha}},\theta\}$ form a dual
coframe for ${\C}T\M^{\varepsilon}$, while
$\{\theta^{\alpha},\theta^{\bar{\alpha}}\}_{\alpha=1}^{n+1}$ is a
dual coframe for ${\C}T{\mathcal{U}}$ if we denote
$\theta^{m}=\partial \varphi, \theta^{\bar{m}}= \bar{\partial}
\varphi$. The Levi form on each $\mathcal M^\varepsilon$ is given by
$$L_\theta=h_{\alpha\bar\beta}\theta^\alpha
\wedge\theta^{\bar\beta}$$ for a $n\times n$ Hermitian matrix valued
function $h_{\alpha\bar\beta}$.

Recalling \eqref{dtheta}, we can divide $d\Theta$ into tangential and transverse components as follows:
  \[
  \partial\bar{\partial}\varphi=h_{\alpha\bar{\beta}}\theta^{\alpha}\wedge\theta^{\bar{\beta}}
  +r\partial\varphi\wedge\bar{\partial}\varphi,
  \]
which gives the following expression for the  K\"ahler form from formula \eqref{Kahler-form0}
  \begin{equation}\label{Kahler-form}
  \omega_+=\frac{i}{2}\left(\frac{h_{\alpha\bar{\beta}}\theta^{\alpha}\wedge \theta^{\bar{\beta}}}{-\varphi}+
  \frac{(1-r\varphi)}{\varphi^{2}}\partial\varphi\wedge\bar{\partial}\varphi\right).
\end{equation}
For a complex manifold with a Hermitian metric $g=h-i\omega$, the K\"ahler form $\omega$ combines the metric and the complex structure by $h(V_{1},V_{2})=\omega(V_{1},JV_{2})$.  Here the Hermitian-Bergman metric may be written as
\[
g^+= \frac{1}{-\varphi}h_{\alpha\bar{\beta}}\theta^{\alpha}\otimes \theta^{\bar{\beta}}
+ \frac{(1-r\varphi)}{\varphi^{2}}\partial\varphi \otimes\bar{\partial}\varphi.
\]
It is easy to see that
\[
g^+(N,N)=4\frac{(1-r\varphi)}{\varphi^{2}}
\]
and the outward unit normal to $\M^{\varepsilon}$ is
\[
\nu=\frac{-\varphi}{2\sqrt{1-r\varphi}}N.
\]

If $\omega=\frac{i}{2}g_{k\bar l}\, \omega^k\wedge\omega^{\bar l}$ is the expression of a K\"ahler form on a complex manifold in terms of a coframe $\{\omega^k\}$ for $T^{1,0}$, we define the trace of a $(1,1)$-form by
$$\text{Tr}\lp i\eta_{k\bar l} \omega^k\wedge\omega^{\bar l}\rp=g^{k\bar l}\eta_{k\bar l}.$$
The Laplace operator on the K\"{a}hler manifold $(X,\omega_+)$ is
calculated as
 $$\Delta_{+}u =\text{Tr}(i\partial\bar{\partial}u);$$
note that we are using a different normalization constant.

We recall the following formula from \cite{GrahamLee}, that decomposes the Laplacian
into tangential and normal components relative to the level sets of $\varphi$:
\begin{equation}\label{Laplacian1}
\Delta_{+}=\frac{-\varphi}{4}\left[\frac{-\varphi}{1-r\varphi}(N^{2}+T^{2}+2rN+2X_r)+2\Delta_{b}+2nN\right],
\end{equation}
where $\Delta_b$ is given by \eqref{sublaplacian} and $r$ is the
transverse curvature. For any real function $f$, we have defined the
real vector field $X_f$, analogous to the gradient of $f$ in
Riemannian geometry, by
$$X_f=f^\alpha W_\alpha+f^{\bar\beta}W_{\bar\beta}.$$ The proof in \cite{GrahamLee} also shows
that $1-r\varphi>0$.

\subsection{Approximate K\"ahler-Einstein and asymptotically complex hyperbolic manifolds}\label{section-ACH}

Let $\Omega$ be a bounded strictly pseudoconvex domain in $\mathbb
C^m$. Fefferman \cite{Fefferman:Monge-Ampere} showed the existence
of a defining function $\varphi$ for $\partial\Omega$ which is an
approximate solution of the complex Monge-Amp\`ere equation
\begin{equation*}\label{complexMA}
\left\{\begin{split}
&J[\varphi]=1, \\
&\varphi|_{\partial\Omega}=0,
\end{split}\right.
\end{equation*}
where $J[\varphi]$ is the Monge-Amp\`ere operator
\begin{equation*}J[\varphi]=(-1)^m \det
\lp\begin{array}{cc}
\varphi & \varphi_{\bar j} \\
\varphi_i & \varphi_{i\bar j}
\end{array}\rp.\end{equation*}

The K\"ahler metric $g^+$ defined from the K\"ahler form \eqref{Kahler-form}
  associated to such an approximate solution $\varphi$ is an approximate K\"ahler-Einstein metric on $\Omega$, i.e. $g^+$ obeys
\begin{equation}\label{Einstein-equation}\Ric (g^+)=-(m+1)\omega_+ +\mathcal O(\varphi^{m-1}),\end{equation}
where $\Ric$ is the Ricci form.

Under certain conditions (see
\cite{Cheng-Yau:Kahler,Lee-Melrose,HislopPeterTang}),  Fefferman's
local approximate solution of the Monge-Amp\`ere equation can be
globalized to an approximate solution of the Monge-Amp\`ere equation
near the boundary of a complex manifold $\X$ with strictly
pseudoconvex boundary $\mathcal M$. It follows that $\X$ carries an
approximate K\"ahler-Einstein metric in the sense of
\eqref{Einstein-equation}. This $\varphi$ is called a globally
defined approximate solution of the Monge-Amp\`ere equation in $\X$.

More precisely, such a solution exists if and only if $\mathcal M$
admits a pseudohermitian structure $\theta$ which is
volume-normalized with respect to  some locally defined, closed
$(n+1,0)$-form in a neighborhood of any point $p\in\M$. If
$\dim(\M)\geq 5$, this condition is equivalent to the existence of a
pseudo-Einstein, pseudohermitian structure $\theta$ on $\M$ (see
\cite{Lee:pseudoeinstein}). Recall that a CR manifold is
pseudo-Einstein if there is a pseudohermitian structure $\theta$ for
which the Webster-Ricci curvature is a multiple of the Levi form.
Note that $\dim(\M)=3$ is special. Finally, in the particular case
that $\X$ is a pseudoconvex domain in $\mathbb C^m$, this condition
is trivially satisfied.

\vspace{0,1cm}

We describe now a more general class of manifolds than the ones
considered so far. In particular, we consider the
$\Theta$-structures introduced by Epstein-Melrose-Mendoza
\cite{Epstein91}. We will only give brief description and we refer
the interested reader to the detailed description in
\cite{Guillarmou08}.

Let $\X$ be a non-compact manifold of (real) dimension $2n+2$ with a
Riemannian metric $g^+$ that compactifies into a smooth $\bar\X$,
with boundary $\partial \X$. We assume that the boundary admits a
contact form $\theta$ and an almost complex structure $J:\text{Ker}
(\theta) \to\text{Ker} (\theta)$ such that $d\theta(\cdot,J\cdot)$
is symmetric and positive definite on $\text{Ker} (\theta)$. The associated
characteristic direction $T$ is characterized by
$$\theta(T)=1,\quad d\theta(T,JZ)=0\, \mbox{ for any }\, Z\in\text{Ker}
(\theta),$$ see \eqref{dm3}, \eqref{Webstermetric-dm}.

We say that $(\X,g^+)$ is an asymptotically complex hyperbolic
manifold if there exists a diffeomorphism $\phi:[0,\epsilon)\times
\partial \bar\X\to \X$ such that $\phi(\{0\}\times
\partial\bar\X)=\partial \bar \X$ and such that the metric splits as a
product of the form
$$\phi^* g^+=\frac{4d\rho ^2+d\theta(\cdot,J\cdot)}{\rho ^2}+\frac{\theta^2}{\rho^{4}}+\rho Q_\rho=:\frac{4d\rho^2+h(\rho)}{\rho^2},$$
for some smooth symmetric tensors $Q_\rho$ on $\partial \bar\X$,
where $\rho$ is a defining function for $\partial\bar \X$.

Note that if $\rho$ is any boundary defining function, then $\rho^4
g^+|_{\partial \bar \X}= e^{4w}\theta^2$ for some smooth function
$w$ on $\partial\bar\X$. Thus it is natural to define the pair
$([\theta],J)$ a conformal pseudohermitian structure on
$\partial\bar\X$.

We say that $g$ is even at order $2k$ if $h^{-1}(\rho)$ has only
even powers in its Taylor expansion at $\rho=0$ at order $2k$, where
$h^{-1}(\rho)$ is the metric dual to $h(\rho)$.

\section{Model case: the Heisenberg group}\label{section-Heisenberg}

The real $n$-dimensional Heisenberg group $\mathbb H^n$ is defined as the set $\R^n\times\R^n\times\R$ endowed with the
group law
$$(\hat x,\hat y,\hat t)\circ(x,y,t)=\big(\hat x+x,\hat y+y,\hat t+t+2(\langle x,\hat
y\rangle_{\R^n}-\langle \hat x, y\rangle_{\R^n})\big)$$ for $(\hat
x,\hat y,\hat t),\,(x,y,t)\in\mathbb H^n$, where
$\langle\cdot,\cdot\rangle_{\R^n}$ is the standard inner product in
$\R^n$. Alternatively, we can use complex coordinates $z=x+iy$ to
denote elements of $\R^n\times\R^n\simeq\C^n$, so that the group
action in $\mathbb{H}^n$ can be written as
$$(\hat z,\hat t)\circ(z,t)=(\hat{z}+z,\hat{t}+t+2\text{Im}\langle \hat z,z\rangle_{\C^n})$$ for
$(\hat{z},\hat t),\,(z,t)\in\mathbb H^n$, where and
$\langle\cdot,\cdot\rangle_{\C^n}$ is the standard inner product in
$\C^n$.

For any fixed $(\hat{z},\hat t)\in\mathbb{H}^n$, we will denote
$\tau_{(\hat{z},\hat t)}:\mathbb{H}^n\rightarrow\mathbb{H}^n$ the
left translation on $\mathbb{H}^n$ by $(\hat{z},\hat t)$ defined by
$$\tau_{(\hat{z},\hat t)}(z,t)\,\,=\,\,(\hat{z},\hat t)\circ(z,t),\qquad\forall\,(z,t)\in\mathbb{H}^n.$$ For any $\lambda>0$ the
dilation $\delta_\lambda:\mathbb{H}^n\rightarrow\mathbb{H}^n$ is
$$\delta_\lambda(z,t)\,\,=\,\,(\lambda
z,\lambda^2t)\,\,=\,\,(\lambda x,\lambda y,
\lambda^2t),\qquad\forall\,(z,t)=(x,y,t)\in\mathbb{H}^n.$$ Notice
that
$$\delta_\lambda\big((\hat{z},\hat t)\circ(z,t)\big)=\delta_\lambda\big(\hat z,\hat t)\circ\delta_\lambda\big(z,t)$$
for every $\lambda>0$ and $(\hat{z},\hat t),\,(z,t)\in\mathbb H^n$.


As we will see, the $n$-dimensional Heisenberg group can be regarded
as  a smooth sub--Riemannian manifold. An orthonormal frame on the
manifold is given by the Lie vector fields
\begin{equation}\label{vectorfields}
X_j=\frac{\partial}{\partial x_j}+2y_j\frac{\partial}{\partial
t},\quad Y_j=\frac{\partial}{\partial
y_j}-2x_j\frac{\partial}{\partial t}\quad\text{for
}j=1,\ldots,n,\quad T_0=\frac{\partial}{\partial t},
\end{equation}
which form a base of the Lie algebra of vector fields on the
Heisenberg group which are left invariant with respect to the group
action $\circ$. Notice that
$$[X_j,Y_k]=-4T_0\delta_{jk},\qquad[X_j,X_k]=[Y_j,Y_k]=0\qquad\text{for }j,k=1,\ldots,n,$$ where $\delta_{jk}$ is the Kronecker symbol. We also set
\begin{align}\label{3}
Z_j\,=\,\frac{\partial}{\partial
z_j}+i\bar{z}_j\frac{\partial}{\partial t}, \quad \bar
Z_j\,=\,\frac{\partial}{\partial \bar
z_j}-i{z}_j\frac{\partial}{\partial t},
\end{align}
 where
\begin{align}
\frac{\partial}{\partial z_j}=\frac{1}{2}\left(\frac{\partial}{\partial x_j}-i\frac{\partial}{\partial y_j}\right), \quad \frac{\partial}{\partial \bar z_j}=\frac{1}{2}\left(\frac{\partial}{\partial x_j}+i\frac{\partial}{\partial y_j}\right),
\end{align}
for $j=1,\ldots,n$.

\subsection{The Heisenberg group as a CR manifold}

The CR structure on the Heisenberg group $\cH^{n}$ is given by the
$n$-dimensional complex bundle
\begin{equation}\label{9}
\mathcal{H}(\cH^{n})=\text{span}_\C\langle Z_1,\ldots,Z_n\rangle
\end{equation}
with $Z_1,\ldots,Z_n$ given by \eqref{3}. We immediately observe
that the associated maximal complex distribution
$H(\cH^n)=\text{Re}\{\mathcal{H}(\cH^{n})\oplus
\overline{\mathcal{H}}(\cH^{n})\}$ is simply
$$H(\mathbb{H}^n)=\text{span}_\R\langle
X_1,\ldots,X_n,Y_1,\ldots,Y_n\rangle,$$ and that it carries the
complex structure $J_b:H(\mathbb{H}^n)\rightarrow H(\mathbb{H}^n)$
defined by
$$J_b X_j=Y_j,\qquad J_b Y_j=-X_j\qquad\text{for }j=1,\ldots,n.$$
The associated one--form $\theta_0$ satisfying \eqref{dm1}, which is
globally defined on $\cH^n$, is precisely
\begin{equation}\label{theta0}
\theta_0\,\,=\,\,dt+i\sum_{j=1}^n\left(z_jd\bar{z}_j-\bar{z}_jdz_j\right)\,\,
=\,\,dt+2\sum_{j=1}^n\left(x_jdy_j-y_jdx_j\right).
\end{equation}
From here  one immediately calculates
$$d\theta_0\,\,=\,\,2i\sum_{j=1}^ndz_j\wedge d\bar{z}_j\,\,=\,\,4\sum_{j=1}^ndx_j\wedge dy_j.$$
Then, the characteristic direction  defined through \eqref{dm3} is simply the vector field $T_0$ previously defined. The real tangent bundle of $\mathbb H^n$ satisfies
 $$T\cH^n=H(\cH^n)\oplus\R T_{0}.$$
The Levi form $L_{\theta_{0}}$ associated to the pseudohermitian
structure $\theta_{0}$ on $\cH^{n}$ constructed from \eqref{dm2a} is
given by
$$L_{\theta_{0}}(Z_j,\bar{Z}_k)=2\delta_{jk}\qquad\text{ for
}j,k=1,\ldots,n,$$
which is a positive definite matrix. This tells us that $(\mathbb{H}^n,\theta_0)$ is strictly
pseudoconvex.
Moreover, one can define the Webster metric $g_{\theta_0}$ on
$\cH^n$ by the relations \eqref{Webstermetric-dm}; in particular,
\begin{equation*}\begin{array}{ccc}
\label{4}&g_{\theta_0}(X_j,X_k)=g_{\theta_0}(Y_j,Y_k)=4\delta_{jk},\qquad
g_{\theta_0}(T_{0},T_{0})=1,&\\
&g_{\theta_0}(X_j,Y_k)=g_{\theta_0}(X_j,T_{0})=g_{\theta_0}(Y_j,T_{0})=0.&
\end{array}\end{equation*}
A volume form on $\cH^n$ is given by
\begin{eqnarray*}
\psi_{0}\,\,=\,\,\theta_0\wedge(d\theta_{0})^n&=&n!(2i)^{n}dz_1\wedge
d\bar{z}_1\wedge\ldots\wedge dz_n\wedge d\bar{z}_n\wedge
dt\\
&=&n!2^{2n}dx_1\wedge dy_1\wedge\ldots\wedge dx_n\wedge dy_n\wedge
dt.
\end{eqnarray*}

Now, since $X_1,\ldots X_n,Y_1,\ldots,Y_n,T_0$ is a basis of
$T\cH^n=H(\cH^n)\otimes\R T_{0}$, we have from \eqref{dm7} that
\[
\nabla_{g_{\theta_0}}u=T_{0}(u)T_{0}+\frac{1}{4}\sum_{j=1}^nX_j(u)X_j+Y_j(u)Y_j
\]
for any smooth function $u$ on $\mathbb{H}^n$. We observe that
$$du=T_{0}u\,\theta_0+\sum_{j=1}^nX_ju\,dx_j+Y_ju\,dy_j,$$ so $$d_bu=\sum_{j=1}^nX_ju\,dx_j+Y_ju\,dy_j.$$
The horizontal gradient is calculated from \eqref{dm8}
\begin{equation*}
\nabla_{b}u:=\frac{1}{4}\sum_{j=1}^nX_j(u)X_j+Y_j(u)Y_j,
\end{equation*}
and the sub--Laplacian associated to $\theta_0$ from \eqref{dm5}
\begin{align*}
\tilde\Delta_{b}u:=\frac{1}{4}\sum_{j=1}^n\left[X^2_j+Y^2_j\right]u=\frac{1}{2}\sum_{j=1}^n
\left[Z_j \bar Z_j+\bar Z_j Z_j\right] u;
\end{align*}
the differential operator is linear, second order, degenerate
elliptic, and it is hypoelliptic being the sum of squares of
(smooth) vector fields satisfying the H\"{o}rmander condition
\cite{Hormander}.

Assume that $u,v$ are smooth functions on $\mathbb{H}^n$ and at
least one of them is compactly supported, then we see that
$$-\int_{\cH^n}v\tilde\Delta_bu\,\psi\,\,=\,\,\int_{\cH^n}g_{\theta_0}(\nabla_{b}v,\nabla_{b}u)\,\psi.$$
The sub--Laplacian on the Heisenberg group or on a orientable,
strictly pseudoconvex CR manifold of hypersurface type is the
Laplace-Beltrami operator on the manifold, which plays an important
role in harmonic analysis and partial differential equations.

For the rest of the paper, we will be working with the new contact
form $\theta=\theta_0/2$ on the Heisenberg group, since $\theta$ is
the CR structure inherited from the complex hyperbolic space (see
\eqref{new-theta}). Therefore, our sub--Laplacian on the Heisenberg
group, by definition will be
\begin{align}
\Delta_{b}u:=\frac{1}{2}\sum_{j=1}^n\left[X^2_j+Y^2_j\right]u
\end{align}
and the characteristic direction will be $T=2T_0$. This explains the
choice of constants in \eqref{Laplacian0}.

\bigskip

\subsection{The Heisenberg group as the boundary of the Siegel domain}

The Heisenberg group may be identified with the boundary of a domain in $\mathbb C^{n+1}$. Indeed, let $\Omega_{n+1}$ be the Siegel domain, defined by
$$\Omega_{n+1}:=\left\{\zeta=(z_1,\ldots,z_n,z_{n+1})=(z,z_{n+1})\in\C^{n}\times\C
\,|\,q(\zeta)>0\right\}.$$
where
$$q(\zeta)=\text{Im}\,z_{n+1}-\sum_{j=1}^n|z_j|^2.$$
Its boundary
$\partial\Omega_{n+1}=\left\{\zeta\in\C^{n+1}\,|\,q(\zeta)=0\right\}$
is an oriented CR manifold of hypersurface type with the CR
structure induced by $\C^{n+1}$. Now define  $G:\cH^n\rightarrow\partial\Omega_{n+1}$,
\begin{align}G(z,t)&=(z,t+i|z|^2),\,\,(z,t)\in\cH^n,\\
G^{-1}(\zeta)&=(z_1,\ldots,z_n,\text{Re}\,z_{n+1}),\,\,\zeta\in\partial\Omega_{n+1}.
\end{align}
One may check that $G$ is a CR isomorphism between $\cH^n$ and
$\partial\Omega_{n+1}$, i.e. it preserves the CR structures.

\vspace{0,1cm}

The boundary manifold inherits a natural CR structure from the
complex structure of the ambient manifold, using  $\varphi=-q$ as a
defining function. The pseudohermitian structure on $\cH^n$ is then
given (via pullback) by the contact form
\begin{equation}\label{new-theta}
\theta=\frac{i}{2}(\bar\partial-\partial)\varphi=
\frac{1}{4}(dz_{m}+d\bar{z}_{m})+
 \frac{i}{2}\sum_{\alpha=1}^{n}z_{\alpha}d\bar{z}_{\alpha}-\bar{z}_{\alpha}dz_{\alpha}=\theta_{0}/2
\end{equation}
where $\theta_0$ was defined in \eqref{theta0}.

We now set up the orthonormal frame we will be using for the rest of
the paper. The subindexes $\alpha,\beta$ will run from 1 to $n$. As
in formula \eqref{3} let
$$Z_{\alpha}=\frac{\partial }{\partial z^{\alpha}}+i\bar{z}_{\alpha}\frac{\partial }{\partial
t},\,\,\,\,\, Z_{\bar{\alpha}}=\frac{\partial }{\partial
\bar{z}^{\alpha}}-iz_{\alpha}\frac{\partial }{\partial
t}\qquad\alpha=1,\ldots,n.$$
In order to complete a basis for $T\Omega_{n+1}$, define the two
real vector fields
$$T=2\frac{\partial }{\partial t}\quad\mbox{and}\quad N= -2\frac{\partial }{\partial q}.$$
Note that $T$ differs from the $T_0$ from the previous section by a factor of 2 in order to match with the contact form \eqref{new-theta}.
Define also
 \[
 \xi_{m}=\frac{1}{2}(N-iT)\quad\text{and}\quad \bar{\xi}_{m}=\frac{1}{2}(N+iT).
 \]
Then a  frame in $\Omega_{n+1}$ is given by
\begin{equation}\label{frame}\{Z_{\alpha}, Z_{\bar{\alpha}},\xi_{m},\bar{\xi}_{m} \}.
\end{equation}
The dual coframe  in $\C T\mathbb H^{n}$ of $\{Z_{\alpha}, Z_{\bar{\alpha}},T \} $ is given by
 $\{\theta^{\alpha},\theta^{\bar{\alpha}},\theta\}$, while the dual coframe of the basis \eqref{frame} in $\Omega_{n+1}$ is simply
 $\{\theta^{\alpha},\theta^{\bar{\alpha}},\theta,dq\}$, where $dq=\partial q+\bar{\partial} q$.
In particular,
 $T\rfloor\theta=1$ and  $Z_{\alpha}\rfloor\theta^{\alpha}=1$.

 Finally, one may calculate the corresponding Levi form from \eqref{Levi0}
 \begin{align}\label{partialbarpartial}
 L_\theta=  {\partial}\bar\partial \varphi(z)=  \sum_{\alpha=1}^{n}dz_{\alpha}\wedge d\bar{z}_{\alpha},
  \end{align}
which may be rewritten as
$$L_\theta=h_{\alpha\bar{\beta}}\theta^{\alpha}\wedge\theta^{\bar{\beta}}\quad
\mbox{for}\quad h_{\alpha\bar\beta}=\frac{\partial^{2}
\varphi}{\partial z_{\alpha}\partial
z_{\bar{\beta}}}=\delta_{\alpha\bar\beta}.$$ Now one may calculate
the sub--Laplacian with respect to the contact form $\theta$.
Indeed,
$$\Delta_b u= \sum_{\alpha,\beta=1}^n h^{\alpha \bar\beta} \left[ u_{\alpha\bar\beta}+u_{\bar\beta\alpha}\right]=\frac{1}{2}\sum_{j=1}^n \left[X_j^2+Y_j^2\right]u,$$
which differs from the sub--Laplacian associated to $\theta_0$ by a
factor of 2.

On the other hand, let us look at the complex structure of
$\Omega_{n+1}$. The functions $t=\text{Re}\, z_m\in\mathbb R$,
$z_1,\ldots,z_n\in\mathbb C^n$ and $\rho=(2q(z))^{1/2}\in(0,\infty)$
give coordinates in the Siegel domain $\Omega_{n+1}\simeq\mathbb
H^{n}\times (0,\infty)$. For the defining function $-\varphi$, one
can construct a K\"ahler form in $\Omega_{n+1}$ as
 \begin{align}\label{kahlerform}
 \omega_{+}&=-\frac{i}{2}{\partial}\bar\partial \log(\varphi)\,=\, \frac{i}{2}\left(\frac{{\partial}\bar\partial \varphi}{-\varphi}+\frac{{\partial}\varphi\wedge\bar \partial \varphi}{\varphi^{2}}\right).
   \end{align}
The first term $\partial\bar\partial \varphi$ is calculated in \eqref{partialbarpartial}, while for the second we observe that
$$\partial\varphi\wedge\bar\partial\varphi=\frac{1}{4}\left[(\partial \varphi+\bar{\partial}\varphi)^{2}-(\partial \varphi-\bar{\partial}\varphi)^{2}\right]
=\frac{1}{4}\left[d\varphi^ 2+4\theta^2\right],$$
where we have used \eqref{Theta} in the last step. Then the K\"ahler form is simply
 \begin{align}\label{Kahler1}
 \omega_{+}&=\frac{i}{2}\left(\frac{h_{\alpha \bar{\beta}}\theta^\alpha\wedge\theta^{\bar \beta}}{-\varphi}+\frac{\tfrac{1}{4}d\varphi^2+\theta^{2}}{\varphi^2}\right),
\end{align}
After the change $q=\rho^2/2$, the Hermitian-Bergman metric is given by
\begin{equation}\label{Kahler-metric}
g^{+}=\frac{1}{2}\left(\frac{d\rho^{2}}{\rho^{2}}+\frac{2\delta_{\alpha\bar{\beta}}\theta^{\alpha}\otimes \theta^{\bar{\beta}}}{\rho^{2}}+
\frac{4\theta^{2}}{\rho^{4}}
\right).\end{equation}
Then $(\Omega_{n+1},\omega_+)$ is a K\"ahler manifold with constant holomorphic curvature.

\subsection{Cayley transform}

The Heisenberg group is also CR isomorphic to the sphere
$\mathbb{S}^{2n+1}\subset\C^{n+1}$ minus a point. The CR equivalence
between the two manifolds is given by the map
$\Psi_c:\mathbb{S}^{2n+1}\setminus\{(0,\ldots,0,-1)\}\rightarrow\cH^n$,
defined by
$$\Psi_{c}(\zeta)=\left(\frac{\zeta_1}{1+\zeta_{n+1}},\ldots,\frac{\zeta_n}{1+\zeta_{n+1}},
\text{Re}\Big(i\frac{1-\zeta_{n+1}}{1+\zeta_{n+1}}\Big)\right),$$
for
$\zeta\in\mathbb{S}^{2n+1}\setminus\{(0,\ldots,0,-1)\}\subset\C^{n+1}$.
Notice that
$$\Psi_{c}^{-1}(z,t)=\left(\frac{2iz}{t+i(1+|z|^2)},\frac{-t+i(1-|z|^2)}{t+i(1+|z|^2)}\right),
\,\,\,\,(z,t)\in\cH^n.$$
The Jacobian determinant of this transformation is
\begin{equation}\label{Jacobian}
J_{c}(z,t)=\frac{2^{2n+1}}{((1+|z|^{2})^{2}+t^{2})^{n+1}}
\end{equation}
so that
\[
\int_{{\mathbb{S}}^{2n+1}}f\,dS=\int_{\cH^n}(f\circ\Psi^{-1}_{c})|J_{c}|\,dzdt.
\]

The CR unitary sphere $\mathbb{S}^{2n+1}$ is the boundary of the
ball model for the complex hyperbolic space of complex dimension
$n+1$, which is the unit ball $B^{n+1}=\{\zeta\in\mathbb C^{n+1} :
|\zeta|<1\}$ equipped with the K\"ahler metric
$g_0=-4\partial\bar\partial \log r$, where $r=1-|\zeta|^2$. The
holomorphic curvature is constant (and negative) and this metric is
called the Bergman metric.

\section{Scattering theory and the conformal fractional sub--Laplacian}

Now we go back to the general setting described in section
\ref{section-ACH}. Let $\X$ be a $m$-dimensional complex manifold
with compact, strictly pseudoconvex boundary $\partial\X=\mathcal M$. Let
$g^+$ be a K\"ahler metric on $\X$ such that there exists a globally
defined approximate solution $\varphi$ of the Monge-Amp\`ere
equation that makes $g^+$ an approximate K\"ahler-Einstein metric in
the sense of \eqref{Einstein-equation} and $\mathcal
X=\{\varphi<0\}$. In particular, the metric $g^+$ belongs to the
class of $\Theta$-metrics considered by \cite{Epstein91}.

The spectrum of the Laplacian $-\Delta_+$ in the metric $g^+$ consists of an absolutely continuous part
$$\sigma_{{ac}}(-\Delta_+)=\left[\tfrac{m^2}{4},\infty\right),$$
and the pure point spectrum satisfying
$$\sigma_{{pp}}(-\Delta_+)\subset\left(0,\tfrac{m^2}{4}\right),$$
and moreover it consists of a finite set of $L^2$-eigenvalues. The
main result in \cite{Epstein91} is the study of the modified
resolvent
$$R(s)=(-\Delta_+ -s(m-s))^{-1}$$
considered in $L^2(\X)$, which allows to define the Poisson map. More precisely, let
$$\Sigma:=\{s\in\mathbb C \,:\,  \Re(s)>m/2,\, s(m-s)\in\sigma_{pp}(-\Delta_+)\};$$
 the resolvent operator is meromorphic for $\Re(s)>\frac{m}{2}-\frac{1}{2}$, having at most finitely many, finite-rank poles at $s\in\Sigma$. Moreover, for $s\not\in\Sigma$ and $\Re(s)>\frac{m}{2}-\frac{1}{2}$,
$$R(s):\dot{\mathcal C}^\infty(\X) \to q^s\mathcal C^\infty(\X),$$
where  $\dot{\mathcal C}^\infty(\X)$ is the set of $\mathcal C^\infty$ functions on $\mathcal X$ vanishing up to infinite order at $\partial \mathcal X$.

The Poisson operator is constructed as follows (see
\cite{HislopPeterTang}). Let $s\in\mathbb C$ such that
$\Re(s)\geq\frac{m}{2}$, $s\not\in \mathbb Z$ and
$2s-m\not\in\mathbb Z$. Let $q=-\varphi$. Then, given $f\in\mathcal
C^{\infty}(\partial\X)$, there exists a unique solution $u_s$ of the
eigenvalue problem
\begin{equation*}
-\Delta_+ u_s -s(m-s)u_s=0,
\end{equation*}
with the expansion
\begin{equation*}\left\{\begin{split}
&u_s=q^{(m-s)}F+q^{s} G, \quad \mbox{for some}\quad F,G\in\mathcal C^\infty(\bar{\X}),\\
& \,F|_{\mathcal M}=f.
\end{split}\right.\end{equation*}
Then the Poisson map is defined as $\mathcal P_s:\mathcal C^\infty(\mathcal M)\to\mathcal C^\infty(\mathring{\mathcal X})$ by $f\mapsto u_s$, and the scattering operator
$$S(s): \mathcal C^\infty(\mathcal M) \to \mathcal C^\infty(\mathcal M)$$
by
$$S(s)f:=G|_{\mathcal M}.$$
Note that $S(s)$ is a meromorphic family of pseudodifferential operators in the $\Theta$-calculus of \cite{Epstein91} of order $2(2s-m)$, and self-adjoint when $s$ is real.

The scattering operator has infinite-rank poles when $\Re(s)>\frac{m}{2}$ and $2s-m\in\mathbb Z$
due to the crossing of indicial roots for the normal operator.
At those exceptional points $s_k=\frac{m}{2}+\frac{k}{2}$, $k\in\mathbb N$, $s_k\not \in\Sigma$, the CR operators may be recovered by calculating the corresponding residue
$$\underset{{s=s_k}}{\Res} \,S(s)=p_k,$$
where $p_k$ is a CR-covariant differential operator of order $2k$, with principal symbol
$$p_k=\frac{(-1)^k}{2^k k!(k-1)!} \prod_{l=1}^k (-\Delta_b+i(k+1-2l)T) \quad + \quad \mbox{l.o.t.}$$
These correspond precisely to the GJMS operators \cite{Graham-Jenne-Mason-Sparling} as constructed by Gover-Graham in \cite{Gover-Graham:CR-powers}.

For $\gamma\in(0,m)\backslash\mathbb N$, set $s=\frac{m+\gamma}{2}$.
We define the CR fractional sub--Laplacian on $(\mathcal M,\theta)$
by
$$P_\gamma^\theta f=c_\gamma S(s)f,$$
for a constant
$$c_\gamma=2^\gamma\frac{\Gamma(\gamma)}{\Gamma(-\gamma)}.$$
$P_\gamma^\theta$ is a pseudodifferential operator of order
$2\gamma$ with principal symbol given by \eqref{symbol}.

The main property of the operator is its CR covariance. Indeed, if $\hat\varphi=v^{\frac{2}{m-\gamma}}\varphi$ is another defining function for $\mathcal M$ and $v|_{\mathcal M}=w$, which gives a relation between the contact forms as $\hat \theta=w^{\frac{2}{m-\gamma}}\theta$, then the corresponding operator is given by
$$P_\gamma^{\hat\theta}(\cdot)=w^{-\frac{m+\gamma}{m-\gamma}}P_\gamma^\theta(w\,\cdot).$$
In particular, for $\gamma=1$, we obtain the CR Yamabe operator of Jerison-Lee \cite{Jerison-Lee:Yamabe}
$$P_1^\theta=-\Delta_b+\frac{n}{2(n+1)}R_\theta,$$
where $\Delta_b$ is the sub--Laplacian on $(\M,\theta)$ and
$R_\theta$ is the associated Webster curvature.

The Heisenberg group $(\mathbb H,\theta)$, for $\theta$ given by \eqref{new-theta}, is the model case for $\mathcal M$, when it is understood as the boundary at infinity of the complex Poincar\'e ball through the Cayley transform. In particular, $P^\theta_\gamma$ agrees with the intertwining operators on the CR sphere calculated in \cite{Branson-Olafsson-Orsted,Graham:compatibility,BrFoMo}. Moreover, we can write explicitly
$$P_1^\theta=-\Delta_b, \quad P_2^\theta= (\Delta_b)^2+T^2,\quad
P_k^\theta=\prod_{l=1}^k (-\Delta_b+i(k+1-2l)T).$$

In the case that $\X$ is an asymptotically hyperbolic manifold as
described in \cite{Guillarmou08}, not necessarily coming from an
approximate solution of the Monge-Amp\`ere equation, one can still
construct the scattering operator and the CR fractional
sub--Laplacian, except possibly for additional poles at the values
$\gamma=\frac{k}{2}$, $k\in\mathbb N$. In \cite{Guillarmou08}, a
careful study of those additional values is obtained with the
assumption that the metric is even up to a high enough order.

\vspace{0,1cm}

One may define also the Branson's fractional CR curvature as
$$Q_\gamma^\theta= P^\theta_\gamma(1), \quad \gamma\in(0,m),$$
with a constant in front that we assume to be 1.

In the critical case $\gamma=m$, the operator $P^\theta_m$ was first
introduced in \cite{Graham:compatibility} (see also
\cite{Graham:thesisI}, \cite{Graham:thesisII}) as a compatibility
operator for the Dirichlet problem for the Bergman Laplacian for the
ball in $\mathbb C^m$. This construction was later generalized to
the boundary of a strictly pseudoconvex domain in $\mathbb C^m$ in
\cite{GrahamLee}. The CR $Q=Q_m$ curvature may be calculated as
 \begin{equation}\label{Branson-curv}c_m Q=\lim_{s\to m} S(s)1.\end{equation}
 It is a conformal invariant in the sense that, for a change of contact form $\hat\theta=e^{2w}\theta$,
 $$e^{2mw} Q^{\hat \theta}=Q^\theta+P_m w,$$
 as it was shown in \cite{Fefferman-Hirachi}. The $Q$-curvature also appears in the calculation of renormalized volume (see \cite{Seshadri:volume}).


\section{The extension problem for the Heisenberg group}

In this section we give the proof of Theorem \ref{thm1}. Here $(\mathcal X,g^+)$ is the Siegel domain $\Omega_{n+1}$ with the complex hyperbolic metric, and with boundary $\mathcal M=\{\varphi=0\}$ the Heisenberg group. Throughout, we assume that $s> m/2$ and we parametrize $s=(m+\gamma)/2$ with $\gamma>0$. (At the end we will also assume that $\gamma<1$, but this is not needed for most of the discussion.)

First  recall the formula for the calculation of the Laplacian $\Delta_+$ from \eqref{Laplacian1}. Denote $q=-\varphi$, $N=-2\partial_q$ and $T=2\partial_t$. Then we may write
$$\Delta_+=q\left[q(\partial_{qq}+\partial_{tt})+\tfrac{1}{2}\Delta_b-n\partial_q\right],$$ with $\Delta_b$ the sub--Laplacian \eqref{Laplacian0}.

Now consider the scattering equation
\begin{equation}\label{scattering-equation}-\Delta_+ u -s(m-s) u=0.\end{equation}
We are looking for solutions which are small (in a certain sense) as $q\to\infty$ and which behave for $q\to 0$ like
$$
u = q^{m-s} F + q^s G \,.
$$
We are interested in the map $F|_{q=0} \mapsto G|_{q=0}$. To extract the leading term we substitute $u= q^{m-s} U$ into \eqref{scattering-equation} and obtain the new equation
\begin{equation}\label{scattering-equation-new}
\left( q\partial_{qq} + (1-\gamma)\partial_q + q\partial_{tt} + \frac12\Delta_b \right) U = 0 \,.
\end{equation}
One additional change of variables $q=\rho^2/2$ transforms
\eqref{scattering-equation-new} into the more familiar extension
problem \eqref{equation-CafSil}. This is the analogue to the
Caffarelli-Silvestre extension \cite{CaSi} on  the Euclidean space,
with the additional term in the $t$-direction that appears in the
Heisenberg group case.

In addition, one may recover the scattering operator as in \cite{Chang-Gon} by
$$P_\gamma^\theta f=c_\gamma S\lp\frac{m+\gamma}{2}\rp f=  \frac{c_\gamma}{\gamma}\lim_{q\to 0} q^{1-\gamma} \partial_q U=\frac{c_\gamma}{\gamma2^{1-\gamma}}\lim_{\rho\to 0} \rho^a \partial_\rho U. $$

\subsection{The group Fourier transform}

Let us recall the Fourier transform on the Heisenberg group $\mathbb
H^{n}$, defined by using the irreducible representation of $\mathbb
H^{n}$ from the Stone--Von Neumann theorem (see
\cite{Bahouri-Gallagher:Heat,Geller77,Thangavelu:book}
  for the necessary background).
For a holomorphic function $\Psi$ on $\C^n$ let
\[
\pi_{z,t}^{\lambda}\Psi(\xi)=\Psi(\xi-\bar{z})e^{i\lambda t+2\lambda(\xi\cdot z-|z|^{2}/2)},\;\lambda>0,
\]
\[
\pi_{z,t}^{\lambda}\Psi(\xi)=\Psi(\xi+z)e^{i\lambda
t+2\lambda(\xi\cdot \bar{z}-|z|^{2}/2)},\;\lambda<0,
\]
where $\xi\cdot z=\sum_{j=1}^n\xi_j z_j$ for $\xi,z\in\C^n$. It is
easy to check that the unitary family
$\{\pi_{z,t}^{\lambda}\},\lambda\in \R\setminus\{0\}$ satisfies
$\pi_{z,t}\pi_{\hat{z},\hat{t}}=\pi_{(z,t)\circ (\hat{z},\hat{t})}$.
Moreover, it
 gives all irreducible representations of
$\mathbb  H^{n}$ (except those trivial on the center).

Consider also the Bargmann spaces
\[
{\mathcal{G}}_{\lambda}:=\{ \Psi\;\text{holomorphic in}\;
{\mathbb{C}^{n}}, \|\Psi\|_{{\mathcal G}_{\lambda}}<\infty \}
\]
where $$\|\Psi\|^{2}_{{\mathcal{G}}_{\lambda}}:=\left(\frac{2|\lambda|}{\pi}\right)^{n}\int_{{\mathbb{C}}^{n}} |\Psi(\xi)|^{2}e^{-2|\lambda||\xi|^{2}}\,d\xi.$$
The space ${\mathcal{G}}_{\lambda}$ is a Hilbert space with orthogonal basis
\[
\Psi_{\alpha,\lambda}(\xi)=\frac{(\sqrt{2|\lambda|}\xi)^{\alpha}}{\sqrt{\alpha !}} \quad \text{for}\;\alpha\in \mathbb N_0^{n}.
\]
Here we adopt the usual multi-index conventions $\xi^{\alpha}:=\xi_{1}^{\alpha_{1}}\xi_{2}^{\alpha_{2}}\cdots\xi_{n}^{\alpha_{n}}$,
 $|\alpha|= \alpha_{1}+\alpha_{2}+\cdots+\alpha_{n}$, $\alpha!=\alpha_{1}!\alpha_{2}!\cdots\alpha_{n}!$ for $\alpha=(\alpha_{1},\alpha_{2},\cdots,\alpha_{n})$, and $\mathbb N_0=\{0,1,2,\cdots\}$.
For $\displaystyle\varphi=\sum_{\alpha\in{\mathbb{N}}_{0}^n}b_{\alpha}\xi^{\alpha}\in {\mathcal{G}}_{\lambda}$ we have that
\[
\|\varphi\|_{{\mathcal{G}}_{\lambda}}:=\sum_{\alpha\in{\mathbb{N}}_{0}^{n}}\alpha!|2\lambda|^{-|\alpha|}|b_{\alpha}|^{2}.
\]
It is clear that the derivative $\partial_{\xi_{j}}\varphi$ and the multiplication ${\xi_{j}}\varphi$ still belong to
${\mathcal{G}}_{\lambda}$.

The Fourier transform of a function $h(z,t)$ in $L^1(\cH^{n})$ is defined by
\begin{align}
{\mathcal{F}}(h)(\lambda)=\int_{\mathbb
H^{n}}h(z,t)\pi_{z,t}^{\lambda}\,dzdt.
\end{align}
Note that $\mathcal F (h)(\lambda)$ takes its values in the space of
bounded operators on $\mathcal G_\lambda$, for every $\lambda$.

We recall two important properties of the Fourier transform:
\begin{align}\label{Fourier-Laplacian}
\tfrac{1}{2}{\mathcal{F}}(\Delta_{b}h)(\lambda)\Psi_{\alpha,\lambda}
=-(2|\alpha|+n)|\lambda|{\mathcal{F}}(h)(\lambda)\Psi_{\alpha,\lambda}
\end{align}
and
\begin{equation}\label{Fourier-t}
{\mathcal{F}}(\partial_{t}h)(\lambda)\Psi=-i\lambda{\mathcal{F}}(h)(\lambda)\Psi.
\end{equation}
The Plancherel formula is
\begin{align}\label{Plancherel}
\|h\|^2_{L^2 (\cH^{n})}=\frac{2^{n-1}}{\pi^{n+1}}
\sum_{\alpha\in\mathbb N^{n}_{0}}\int_{\R}\|
{\mathcal{F}}(h)(\lambda)\Psi_{\alpha,\lambda}\|^{2}_{{\mathcal{G}}_{\lambda}}|\lambda|^{n}\,d\lambda,
\end{align}
where Heisenberg group is endowed with a smooth left invariant measure, the Haar measure, which in the coordinate system $(x, y, t)$ is simply the Lebesgue measure $dx\,dy\,dt$. And the inversion formula,
\[
h(z,t)=
\frac{2^{n-1}}{\pi^{n+1}}\int_{\R}\text{tr}\;(\pi^{\lambda}_{z,t})^*
{\mathcal{F}}(h)(\lambda)\,|\lambda|^{n}d\lambda,
\]
where $(\pi^{\lambda}_{z,t})^*$ is the adjoint operator of $\pi^{\lambda}_{z,t}$.

\begin{Rem} In view of \eqref{Fourier-Laplacian} one may then define the (pure)
fractional powers of the sub--Laplacian by
\begin{align}\label{def-FDb}
{\mathcal{F}}((-\Delta_{b})^{\gamma}h)(\lambda)
{\Psi}_{\alpha,\lambda}
:=(2(2|\alpha|+n)|\lambda|)^{\gamma}{\mathcal{F}}(h)(\lambda)
{\Psi}_{\alpha,\lambda}.
\end{align}
However, it does not agree with the operator $P^\theta_\gamma$ we are interested in since it does not have the CR covariance property \eqref{CRcovariance}.
\end{Rem}

For simplicity, in the following we will denote
$$\hat h_\alpha(\lambda):=\mathcal{F}(h)(\lambda)\Psi_{\alpha,\lambda}\quad \mbox{and}\quad k:=k_\alpha=2|\alpha|+n,$$
and the dependence on each level $\alpha\in\N_0^n$ will be sometimes made implicit.

\subsection{Solution of the ODE}\label{subsection-Fourier}

We now perform a Fourier transform of
\eqref{scattering-equation-new}, which, in view of \eqref{Fourier-t}
and \eqref{Fourier-Laplacian}, amounts to replacing $\partial_{tt}$
by $-\lambda^2$ and $\frac12\Delta_b$ by $-|\lambda| k$. The new
equation, written in the basis $\Psi_{\alpha,\lambda}$, reduces to
$$
\left( q\partial_{qq} + (1-\gamma)\partial_q - \lambda^2 q - |\lambda| k \right) \phi = 0 \,.
$$
Here $\lambda$ and $k$ are parameters and $\phi$ is a function of
the single variable $q$. We are looking for a solution of this
equation which satisfies $\phi(q)\to 0$ as $q\to\infty$ and
$\phi(0)=1$. We make the ansatz $\phi(q) = e^{-|\lambda|q}
g(2|\lambda|q)$ and find that the equation for $\phi$ is equivalent
to the following equation for $g(x)$,
$$
x g'' + (1-\gamma - x) g' - \frac{1-\gamma+k}2 g = 0 \,.
$$
This is already Kummer's equation, but it is convenient to transform it into another equation of the same type. To do so, we set $g(x) = x^\gamma h(x)$, which leads to
$$
xh'' + (1+\gamma-x) h' - \frac{1+\gamma+k}{2} h = 0 \,.
$$
The boundary conditions become
$$\lim_{x\to 0} x^{\gamma} h(x)= 1\quad\mbox{and}\quad\lim_{x\to\infty} e^{-x/2} x^{\gamma} h(x) = 0.$$ To proceed, we recall the following facts about special functions.

\begin{Lem}[Kummer's equation]\label{kummer}
Let $a\geq 0$ and $b>1$. The equation $xw''+ (b-x)w'-aw=0$ on
$(0,\infty)$ has two linearly independent solutions $M(a,b,\cdot)$
and $V(a,b,\cdot)$. They satisfy, as $x\to\infty$,
$$
M(a,b,x) = \frac{\Gamma(b)}{\Gamma(a)} e^x x^{a-b} \left( 1+ \mathcal O(x^{-1}) \right)
$$
and
$$V(a,b,x) = x^{-a} \left( 1+ \mathcal O(x^{-1}) \right)
$$
and, as $x\to 0$,
$$
M(a,b,x) = 1+ \mathcal O(x)
\qquad\text{and}\qquad
x^{-1+b} V(a,b,x) = \frac{\Gamma(b-1)}{\Gamma(a)} + o(1) \,.
$$
The function $M(a,b,\cdot)$ is real analytic. Moreover, for all $x>0$
$$
V(a,b,x) = \frac{\pi}{\sin\pi b} \left( \frac{M(a,b,x)}{\Gamma(1+a-b)\Gamma(b)} - x^{1-b} \frac{M(1+a-b,2-b,x)}{\Gamma(a)\Gamma(2-b)} \right) \,.
$$
\end{Lem}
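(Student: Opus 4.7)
The statement collects classical facts about the confluent hypergeometric (Kummer) equation; I would organize the proof into construction, near-zero behavior, and behavior at $\infty$. Define
\begin{equation*}
M(a,b,x) = \sum_{n=0}^\infty \frac{(a)_n}{(b)_n\, n!}\, x^n, \qquad (a)_n = \frac{\Gamma(a+n)}{\Gamma(a)}.
\end{equation*}
The ratio test shows the series has infinite radius of convergence, so $M(a,b,\cdot)$ is entire, hence real analytic, and a termwise substitution into $xw''+(b-x)w'-aw=0$ confirms that it is a solution. Evaluation at $0$ gives $M(a,b,0)=1$, which immediately yields the stated near-zero expansion for $M$.

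The asymptotic of $M$ at $+\infty$ is the central difficulty: $\infty$ is an irregular singular point of Kummer's equation, so no Frobenius-type argument suffices. I would exploit the Euler integral representation
\begin{equation*}
M(a,b,x) = \frac{\Gamma(b)}{\Gamma(a)\Gamma(b-a)} \int_0^1 e^{xt}\, t^{a-1}(1-t)^{b-a-1}\,dt \qquad (b>a>0),
\end{equation*}
derived by expanding $e^{xt}$ and identifying Beta integrals term by term. For large $x$ the integrand concentrates near $t=1$; the substitution $t = 1 - s/x$ combined with Watson's lemma yields $M(a,b,x) = \frac{\Gamma(b)}{\Gamma(a)} e^x x^{a-b}(1+\mathcal{O}(x^{-1}))$. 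Parameter configurations outside $b>a>0$ (in particular $a=0$, where $M \equiv 1$ and the right-hand side is trivially correct since $\Gamma(a)^{-1}=0$) are handled by analytic continuation in $a$ of the leading coefficient, combined with the Poincar\'e asymptotic theory for irregular singular points, which guarantees that Kummer's equation admits two actual linearly independent solutions with formal asymptotic shapes $e^x x^{a-b}$ and $x^{-a}$; the matching constants in front are then fixed meromorphically in the parameters.

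For the second solution, observe that $w(x) \mapsto x^{1-b}\tilde w(x)$ combined with the parameter change $(a,b)\mapsto (1+a-b, 2-b)$ preserves Kummer's equation; hence $x^{1-b} M(1+a-b, 2-b, x)$ is a second solution, linearly independent from $M(a,b,x)$ when $b\notin \mathbb{Z}$. Define $V$ by the linear combination stated in the lemma: the prefactor $\pi/\sin(\pi b)$ is arranged precisely so that the $e^x$-growth of the two $M$-pieces cancels, leaving the polynomial decay $V(a,b,x) = x^{-a}(1+\mathcal{O}(x^{-1}))$ at infinity, a fact that can also be checked directly from the dual representation
\begin{equation*}
V(a,b,x) = \frac{1}{\Gamma(a)} \int_0^\infty e^{-xt}\, t^{a-1}(1+t)^{b-a-1}\,dt \qquad (a>0)
\end{equation*}
via Watson's lemma at $t=0$. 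For $b\in\mathbb{N}$ the formula for $V$ is interpreted as a limit (L'H\^opital in $b$, using joint analyticity of $M$ in its parameters), producing a well-defined solution. Finally, the near-zero expansion of $V$ is read off the defining combination: since $M(a,b,x)$ and $M(1+a-b,2-b,x)$ both tend to $1$ as $x\to 0$, the $M(a,b,x)$ term stays bounded while the $x^{1-b}$ term supplies the leading singular contribution, and the reflection identity $\Gamma(b-1)\Gamma(2-b) = -\pi/\sin(\pi b)$ converts the resulting coefficient into $\Gamma(b-1)/\Gamma(a)$, matching the claim.
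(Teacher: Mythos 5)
The paper itself does not prove this lemma: its ``proof'' is the citation to \cite[Chp.~13]{AbSt}, together with the remark that every property actually used later can be extracted from the single integral representation $V(a,b,x)=\Gamma(a)^{-1}\int_0^\infty e^{-tx}t^{a-1}(1+t)^{b-a-1}\,dt$. Your sketch is essentially the classical self-contained route, and most of its ingredients (the series definition of $M$, Watson's lemma, the second Frobenius solution $x^{1-b}M(1+a-b,2-b,x)$, the reflection-formula computation of the limit of $x^{b-1}V$ as $x\to0$) are correct and standard.

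Two points, however, are genuinely soft. First, your large-$x$ asymptotics for $M$ rests on the Euler integral, which requires $b>a>0$, whereas the lemma allows (and the paper's application, with $a=\tfrac{1+\gamma+k}{2}$, $b=1+\gamma$, actually lives in) the complementary range $a\ge b$; the appeal to ``Poincar\'e asymptotic theory with meromorphically varying connection constants'' is exactly where the work lies and is not carried out. (Relatedly, the parenthetical about $a=0$ is backwards: there $M\equiv1$ while the right-hand side of the asymptotic formula vanishes, so that display is not ``trivially correct'' at $a=0$; it must be read with $a>0$, as in the application.) Second, you define $V$ by the connection formula and then justify its decay ``from the dual representation'' --- but the equality of that linear combination with the Laplace integral is precisely the nontrivial identity asserted in the lemma, so invoking it at that point is circular; and the alternative argument, cancellation of the two exponentially growing Poincar\'e expansions, only shows the difference is $o(e^xx^{-N})$ for every $N$, not that it equals $x^{-a}(1+\mathcal O(x^{-1}))$. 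The tidy order (the one the paper hints at) is the reverse: define $V$ by the integral, read off $V(a,b,x)=x^{-a}(1+\mathcal O(x^{-1}))$ at infinity by Watson's lemma and $x^{b-1}V(a,b,x)\to\Gamma(b-1)/\Gamma(a)$ at zero by dominated convergence, and then obtain the connection formula by expanding $V$ in the basis $\{M(a,b,\cdot),\,x^{1-b}M(1+a-b,2-b,\cdot)\}$ and identifying the two coefficients from the two-term behavior as $x\to0$; this also delivers the large-$x$ behavior of $M$ for all parameter values as a byproduct. With these repairs your outline becomes a complete proof, and it is more than the paper itself supplies.
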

These facts are contained in \cite[Chp. 13]{AbSt}. The assumptions
$a\geq 0$ and $b>1$ can be significantly relaxed, but this is not
important for us. Instead of referring to the known results of this
lemma, one can directly deduce all the properties that we need in
the following from the integral representation
$$
V(a,b,x) = \frac{1}{\Gamma(a)} \int_0^\infty e^{-tx} t^{a-1} (1+t)^{b-a-1} \,dt \,.
$$
For this formula, see again \cite[Chp. 13]{AbSt}.

Let us return to our scattering problem \eqref{scattering-equation-new}. Lemma \ref{kummer} implies that
$$
h(x) = \frac{\Gamma\left( \frac{1+\gamma+k}{2}\right)}{\Gamma(\gamma)} \ V\lp \tfrac{1+\gamma+k}{2},1+\gamma,x\rp.
$$
Because of the representation of the $V$-functions in terms of the $M$-functions we learn that for $x\to 0$
\begin{align*}
h(x) & = \frac{\Gamma\left( \frac{1+\gamma+k}{2}\right)}{\Gamma(\gamma)} \ \frac{\pi}{\sin\pi (1+\gamma)} \left( \frac{M(\frac{1+\gamma+k}{2},1+\gamma,x)}{\Gamma\left( \frac{1-\gamma+k}{2}\right) \Gamma(1+\gamma)} - x^{-\gamma} \frac{M(\frac{1-\gamma+k}{2},1-\gamma,x)}{\Gamma(\frac{1+\gamma+k}{2})\Gamma(1-\gamma)} \right) \\
& = x^{-\gamma} - \frac{\Gamma(1-\gamma)}{\Gamma(1+\gamma)} \ \frac{\Gamma\left( \frac{1+\gamma+k}{2}\right)}{\Gamma\left( \frac{1-\gamma+k}{2}\right)}
+ \mathcal O(x^{1-\gamma}) \,.
\end{align*}
Undoing the substitutions we made we find that
\begin{equation}\label{asymptotics}\begin{split}
\phi(q) & = e^{-|\lambda|q} (2|\lambda| q)^\gamma h(2|\lambda| q)
= e^{-|\lambda|q} \left( 1 - \frac{\Gamma(1-\gamma)}{\Gamma(1+\gamma)} \ \frac{\Gamma\left( \frac{1+\gamma+k}{2}\right)}{\Gamma\left( \frac{1-\gamma+k}{2}\right)} (2|\lambda| q)^\gamma + \mathcal O(q) \right) \\
& = 1 - \frac{\Gamma(1-\gamma)}{\Gamma(1+\gamma)} \ \frac{\Gamma\left( \frac{1+\gamma+k}{2}\right)}{\Gamma\left( \frac{1-\gamma+k}{2}\right)} (2|\lambda| q)^\gamma + \mathcal O(q) \quad \mbox{as}\quad q\to 0.
\end{split}\end{equation}
This proves that for $0<\gamma<1$, the scattering operator which maps $F|_{q=0}\mapsto G|_{q=0}$ is diagonal with respect to the Fourier transform and its symbol is
$$
- \frac{\Gamma(1-\gamma)}{\Gamma(1+\gamma)} \ \frac{\Gamma\left( \frac{1+\gamma+k}{2}\right)}{\Gamma\left( \frac{1-\gamma+k}{2}\right)} (2|\lambda|)^\gamma \,.
$$
Thus, recalling that $T= 2\partial_t$, and using again the properties \eqref{Fourier-Laplacian}-\eqref{Fourier-t},
$$
S(s) = - |T|^\gamma \frac{\Gamma(1-\gamma)}{\Gamma(1+\gamma)} \
\frac{\Gamma\left( \frac{1+\gamma}{2} +
\frac{-\Delta_b}{2|T|}\right)}{\Gamma\left( \frac{1-\gamma}{2} +
\frac{-\Delta_b}{2|T|}\right)} \,, \quad s= \frac{m+\gamma}{2},\quad
\gamma\in (0,1) \,.$$ Taking into account \eqref{operators} and
\eqref{constant}, we have shown \eqref{symbol}.

\subsection{The extension problem}

For any $0<\gamma<1$ we may define an extension operator $\mathcal
E_\gamma$ which maps functions $f$ on the Heisenberg group $\Hei^n$
to functions $\mathcal E_\gamma f$ on the Siegel domain
$\Omega_{n+1}\simeq\Hei^n\times (0,\infty)$. For every $q>0$ we can
consider $\mathcal E_\gamma f(\cdot,q)$ as a function on the
Heisenberg group, which is defined through the Fourier multiplier
$\phi_\alpha(2|\lambda| q)$, where
$$
\phi_\alpha(x) = \frac{\Gamma\left( \frac{1+\gamma+k_\alpha}{2}\right)}{\Gamma(\gamma)} \ e^{-x/2} \ x^\gamma \ V\lp \tfrac{1+\gamma+k_\alpha}{2},1+\gamma,x\rp \,.
$$
(We do not indicate the dependence of $\phi_\alpha$ on $\gamma$ in the notation). In other words,
\begin{equation}\label{equation20}
\widehat{\mathcal E_\gamma f(\cdot,q)}_\alpha(\lambda) = \phi_\alpha(2|\lambda| q) \ \hat f_\alpha(\lambda)
\end{equation}
for every $q>0$, $\alpha\in\N^n_0$ and $\lambda\in\R$. The fact that
$\phi_\alpha(0)=1$ from the previous section implies that for $q=0$
one has, indeed,
$$
\widehat{\mathcal E_\gamma f(\cdot,0)}_\alpha(\lambda) = \hat f_\alpha(\lambda)
$$
for every $\alpha\in\N^n_0$ and $\lambda\in\R$, that is $\mathcal
E_\gamma f(\cdot,0)=f$, which justifies the name `extension'.

Moreover, the ODE facts that we established in the previous section imply that for every $\alpha\in\N^n_0$ and $\lambda\in\R$, the function $q\mapsto \widehat{\mathcal E_\gamma f(\cdot,q)}_\alpha(\lambda)$ solves the equation
$$
\left( q\partial_{qq} + (1-\gamma)\partial_q - \lambda^2 q - |\lambda| k_\alpha \right) \widehat{\mathcal E_\gamma f(\cdot,q)}_\alpha(\lambda) = 0 \,,
$$
and therefore, the function $\mathcal E_\gamma f$ on $\Omega_{n+1}$ satisfies
$$
\left( q\partial_{qq} + (1-\gamma)\partial_q + q\partial_{tt} +
\frac12\Delta_b \right) \mathcal E_\gamma f = 0 \,.
$$
This completes the proof of Theorem \ref{thm1}.


\section{Sharp Sobolev trace inequalities}\label{section-trace}

Consider the quadratic form $a_\gamma$ associated to the operator
$P_\gamma^\theta$ as defined in \eqref{5}. Using Fourier transform
it may be rewritten as
\begin{equation}\label{a_gamma}
a_\gamma[f] =\frac{2^{n-1}}{\pi^{n+1}} \sum_{\alpha\in\N_0^n} \frac{\Gamma\left( \frac{1+\gamma+k_\alpha}{2}\right)}{\Gamma\left( \frac{1-\gamma+k_\alpha}{2}\right)} \int_\R  (2|\lambda|)^\gamma \|\hat f_\alpha(\lambda)\|_{\mathcal G_\lambda}^2 \,|\lambda|^nd\lambda \,.
\end{equation}
We now consider the energy functional in the extension introduced in
\eqref{6}, where we recall that $q=\rho^2/2$,
$$
\mathcal A_\gamma[U] = 2^{1-\gamma}\int_{\Omega_{n+1}} \left(
q^{1-\gamma} |\partial_q U|^2 + q^{1-\gamma} |\partial_t U|^2 +
\frac14 q^{-\gamma} \sum_{j=1}^n \left( |X_j U|^2 + |Y_j U|^2
\right) \right) \,d\zeta
$$
for functions $U$ on $\Omega_{n+1}$. Here $d\zeta = dx_1\cdots d x_n
dy_1 \cdots dy_n dt dq$. We define the space $\dot{\mathcal
H}^{1,\gamma}(\Omega_{n+1})$ as the completion of $\mathcal
C_0^\infty(\overline{\Omega_{n+1}})$ with respect to $\mathcal
A_\gamma^\frac{1}{2}$. Here
$\overline{\Omega_{n+1}}=\Hei^n\times[0,\infty)$, including the
boundary. One can show (and it also follows essentially from our
arguments below) that this completion is a space of functions.

Similarly, we denote by $\dot S^\gamma(\Hei^n)$ the closure of $\mathcal C_0^\infty(\Hei^n)$ with respect to the quadratic form $a_\gamma$. (These are the fractional analogues of the Sobolev spaces introduced by Folland and Stein \cite{Folland-Stein:Heisenberg}). The dual space of $\dot S^\gamma(\Hei^n)$, with respect to the inner product of $L^2(\Hei^n)$, is the space $\dot S^{-\gamma}(\Hei^n)$, which is defined through $a_{-\gamma}$.

The following result contains an energy equality for the fractional norm $\dot S^\gamma(\Hei^n)$, using the extension problem from Theorem \ref{thm1}. This idea of using the extension has been successfully employed in several other settings (\cite{Frank-Lenzmann,Frank-Lenzmann-Silvestre,Banica-Gonzalez-Saez}, for instance).

\begin{Prop}\label{extequal}
Let $0<\gamma<1$ and $f\in \dot S^\gamma(\Hei^n)$. Then $\mathcal E_\gamma f\in\dot{\mathcal H}^{1,\gamma}(\Omega_{n+1})$ and
$$
\mathcal A_\gamma[\mathcal E_\gamma f] = 2^{1-\gamma}\gamma\
\frac{\Gamma(1-\gamma)}{\Gamma(1+\gamma)}\ a_\gamma[f] \,.
$$
\end{Prop}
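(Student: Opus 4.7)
The plan is to compute $\mathcal A_\gamma[\mathcal E_\gamma f]$ by passing to the group Fourier transform at each fixed $q>0$, exploiting the ODE structure in $q$ already derived in Section \ref{subsection-Fourier}, and reducing the $q$-integral to a boundary term at $q=0$.

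First I would use the Plancherel identity \eqref{Plancherel} together with the multiplier rules \eqref{Fourier-Laplacian}, \eqref{Fourier-t} slice by slice in $q$. Combined with \eqref{equation20}, this converts $\mathcal A_\gamma[\mathcal E_\gamma f]$, written in the $q$-coordinates of the proposition, into
\begin{equation*}
\mathcal A_\gamma[\mathcal E_\gamma f] = 2^{1-\gamma}\,\frac{2^{n-1}}{\pi^{n+1}}\sum_{\alpha\in\mathbb N_0^n}\int_{\mathbb R} J_\alpha(\lambda)\,\|\hat f_\alpha(\lambda)\|_{\mathcal G_\lambda}^2\,|\lambda|^n\,d\lambda,
\end{equation*}
where, writing $\Phi_\alpha=\phi_\alpha$ and using $\partial_q[\Phi_\alpha(2|\lambda|q)]=2|\lambda|\Phi_\alpha'(2|\lambda|q)$,
\begin{equation*}
J_\alpha(\lambda)=\int_0^\infty\!\bigl[\,q^{1-\gamma}\bigl(4\lambda^2\,\Phi_\alpha'(2|\lambda|q)^2+\lambda^2\,\Phi_\alpha(2|\lambda|q)^2\bigr)+q^{-\gamma}k_\alpha|\lambda|\,\Phi_\alpha(2|\lambda|q)^2\bigr]\,dq.
\end{equation*}
The substitution $x=2|\lambda|q$ factors out $(2|\lambda|)^\gamma$ and yields $J_\alpha(\lambda)=(2|\lambda|)^\gamma I_\alpha$ with
\begin{equation*}
I_\alpha=\int_0^\infty\!\Bigl[\,x^{1-\gamma}(\Phi_\alpha')^2+\tfrac14\,x^{1-\gamma}\Phi_\alpha^2+\tfrac{k_\alpha}{2}\,x^{-\gamma}\Phi_\alpha^2\,\Bigr]\,dx.
\end{equation*}

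Second I would integrate by parts in the first term of $I_\alpha$. The Fourier transform of \eqref{scattering-equation-new} gives that $\Phi_\alpha$ satisfies
\begin{equation*}
x\Phi_\alpha''(x)+(1-\gamma)\Phi_\alpha'(x)=\tfrac{x}{4}\Phi_\alpha(x)+\tfrac{k_\alpha}{2}\Phi_\alpha(x),
\end{equation*}
hence $(x^{1-\gamma}\Phi_\alpha')'=\tfrac14 x^{1-\gamma}\Phi_\alpha+\tfrac{k_\alpha}{2}\,x^{-\gamma}\Phi_\alpha$. The two bulk integrals produced by integration by parts therefore cancel the remaining two terms of $I_\alpha$ exactly, and
\begin{equation*}
I_\alpha=\bigl[\,x^{1-\gamma}\Phi_\alpha'(x)\Phi_\alpha(x)\,\bigr]_{x=0}^{x=\infty}.
\end{equation*}

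Third I would evaluate this boundary term using the representation $\Phi_\alpha(x)=e^{-x/2}x^\gamma h(x)$ with $h$ built from the $V$-function of Lemma \ref{kummer}. The exponential factor kills the contribution at $x=\infty$ for both $\Phi_\alpha$ and $\Phi_\alpha'$. At $x\to 0^+$, differentiating the expansion \eqref{asymptotics} — which can be justified by appealing to the integral representation of $V(a,b,\cdot)$ recalled after Lemma \ref{kummer} — gives $\Phi_\alpha'(x)=-\gamma c_{k_\alpha}\,x^{\gamma-1}+O(1)$ with
\begin{equation*}
c_{k_\alpha}=\frac{\Gamma(1-\gamma)}{\Gamma(1+\gamma)}\,\frac{\Gamma\!\bigl(\tfrac{1+\gamma+k_\alpha}{2}\bigr)}{\Gamma\!\bigl(\tfrac{1-\gamma+k_\alpha}{2}\bigr)}.
\end{equation*}
Since $\Phi_\alpha(x)\to 1$, this produces $\lim_{x\to 0^+}x^{1-\gamma}\Phi_\alpha'(x)\Phi_\alpha(x)=-\gamma c_{k_\alpha}$ and hence $I_\alpha=\gamma c_{k_\alpha}$. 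Inserting this back and comparing with the formula \eqref{a_gamma} for $a_\gamma[f]$ yields the identity of the proposition, from which $\mathcal E_\gamma f\in\dot{\mathcal H}^{1,\gamma}(\Omega_{n+1})$ is automatic.

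The main technical obstacle is the rigorous justification of the Fubini-type interchange of the Plancherel sum with the $q$-integral, together with the differentiation of the asymptotic expansion near $x=0$. I would dispose of these by first establishing the identity for $f\in\mathcal C_0^\infty(\Hei^n)$ — where $\hat f_\alpha(\lambda)$ has rapid decay in $\alpha$ and $\lambda$ and the $V$-function integral representation provides uniform control of $\Phi_\alpha$ and $\Phi_\alpha'$ as well as their asymptotics — and then extending the equality to all $f\in\dot S^\gamma(\Hei^n)$ by density, using the proved equality itself to control the relevant norms.
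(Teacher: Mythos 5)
Your proposal is correct and follows essentially the same route as the paper: Plancherel plus the substitution $x=2|\lambda|q$ reduces the energy to the per-mode constant $I_\alpha$ (the paper's $C_\alpha$), which is evaluated by pairing the Kummer ODE against $x^{-\gamma}\phi_\alpha$ — your integration by parts is the same computation — leaving the boundary term at $x=0$, computed from the differentiated expansion \eqref{asymptotics} and matched against \eqref{a_gamma}. The only addition is your explicit density/Fubini remark, which the paper leaves implicit.
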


\begin{proof}
We use the shorthand $\hat U_\alpha(\lambda,q)$ for
$\widehat{U(\cdot,q)}_\alpha(\lambda)$. In this notation,
Plancherel's identity \eqref{Plancherel} gives
\begin{equation*}\begin{split}
&\mathcal A_\gamma[U] =\\
&\,\,\, 2^{1-\gamma}\frac{2^{n-1}}{\pi^{n+1}}\sum_\alpha\! \int_\R\!
\int_0^\infty\!\! \left( q^{1-\gamma} \|\partial_q \hat
U_\alpha(\lambda,q)\|_{\mathcal G_\lambda}^2 + \left( q^{1-\gamma}
\lambda^2 + q^{-\gamma} |\lambda|k_\alpha \right) \|\hat
U_\alpha(\lambda,q)\|_{\mathcal G_\lambda}^2 \right) dq\,|\lambda|^n
d\lambda .
\end{split}\end{equation*}
We apply this to $U= \mathcal E_\gamma f$ and plug the explicit expression for $\hat
U_\alpha(\lambda,q)$ from \eqref{equation20} into the above formula. After changing variables $x=2|\lambda|q$ we arrive at
\begin{equation}\label{eq10}\mathcal A_\gamma[\mathcal E_\gamma f] =2^{1-\gamma}\frac{2^{n-1}}{\pi^{n+1}}\sum_\alpha C_{\alpha} \int_\R (2|\lambda|)^\gamma \|\hat f_\alpha(\lambda)\|_{\mathcal G_\lambda}^2 \,|\lambda|^n d\lambda\end{equation}
with the constant
$$
C_{\alpha} = \int_0^\infty \left( x^{1-\gamma} |\phi_\alpha'|^2 + \left( \frac14 x^{1-\gamma} + \frac12 x^{-\gamma} k_\alpha \right) |\phi_\alpha|^2 \right) dx \,.
$$
Its precise value will be calculated in the next few lines. According to the previous subsection,
$$
\left( x\partial_{xx} + (1-\gamma)\partial_x - \frac14 x - \frac12 k_\alpha \right)\phi_\alpha = 0 \quad\text{in}\ (0,\infty)
$$
and $\phi_\alpha(0)=1$. Moreover, $\phi_\alpha$ decays exponentially. Thus, we can multiply the equation for $\phi_\alpha$ by $x^{-\gamma}\phi_\alpha$ and integrate over the interval $(\epsilon,\infty)$ to get
$$
C_\alpha = \lim_{\epsilon\to 0} \int_\epsilon^\infty \left( x^{1-\gamma} |\phi_\alpha'|^2 + \left( \frac14 x^{1-\gamma} + \frac12 x^{-\gamma} k_\alpha \right) |\phi_\alpha|^2 \right) dx
= - \lim_{\epsilon\to 0} \epsilon^{1-\gamma} \phi_\alpha(\epsilon) \phi_\alpha'(\epsilon) \,.
$$
We know from the previous subsection that, as $\epsilon\to 0$,
$$
\phi_\alpha(\epsilon) = 1 - \frac{\Gamma(1-\gamma)}{\Gamma(1+\gamma)} \ \frac{\Gamma\left( \frac{1+\gamma+k_\alpha}{2}\right)}{\Gamma\left( \frac{1-\gamma+k_\alpha}{2}\right)} \epsilon^\gamma + \mathcal O(\epsilon)
$$
and that this expansion may be differentiated. Thus,
$$
C_\alpha = \gamma\ \frac{\Gamma(1-\gamma)}{\Gamma(1+\gamma)} \
\frac{\Gamma\left( \frac{1+\gamma+k_\alpha}{2}\right)}{\Gamma\left(
\frac{1-\gamma+k_\alpha}{2}\right)} \,.
$$
Substituting back into \eqref{eq10} and recalling \eqref{a_gamma} we conclude that
\begin{eqnarray*}
\mathcal A_\gamma[\mathcal E_\gamma f]& = &2^{1-\gamma}\gamma\
\frac{\Gamma(1-\gamma)}{\Gamma(1+\gamma)}
\frac{2^{n-1}}{\pi^{n+1}}\sum_\alpha \frac{\Gamma\left(
\frac{1+\gamma+k_\alpha}{2}\right)}{\Gamma\left(
\frac{1-\gamma+k_\alpha}{2}\right)} \int_\R (2|\lambda|)^\gamma
\|\hat f_\alpha(\lambda)\|^2_{\mathcal
G_\lambda}\,|\lambda|^nd\lambda\\
& =& 2^{1-\gamma}\gamma\ \frac{\Gamma(1-\gamma)}{\Gamma(1+\gamma)}
a_\gamma[f] \,,
\end{eqnarray*}
as claimed.\\
\end{proof}

\begin{proof}[Proof of Theorem \ref{thm2}]
Let $U\in \mathcal C_0^\infty(\overline{\Omega_{n+1}})$ and $g\in
\dot S^{-\gamma}(\Hei^n)$, the dual of $\dot S^\gamma(\Hei^n)$. Then
$h:=\big(P^\theta_{\gamma}\big)^{-1} g = P_{-\gamma}^\theta g \in
\dot S^\gamma(\Hei^n)$. As we observed before, its extension
$H=\mathcal E_\gamma h$ from \eqref{equation20} satisfies
$$
\left( q\partial_{qq} + (1-\gamma)\partial_q + q\partial_{tt} + \frac12\Delta_b \right) H = 0 \,.
$$
Using dominated convergence and \eqref{asymptotics}, see also
\eqref{7}, one can show that
$$
H(\cdot,\epsilon) \to h \ \text{in}\ \dot S^\gamma(\Hei^n)
\qquad \text{and} \qquad
 \epsilon^{1-\gamma} \frac{\partial H}{\partial q}(\cdot,\epsilon) \to -\gamma 2^{-\gamma}\ \frac{\Gamma(1-\gamma)}{\Gamma(1+\gamma)} P_\gamma^\theta h
  \ \text{in}\ \dot S^{-\gamma}(\Hei^n) \,.
$$
Thus,
\begin{equation*}\begin{split}
2^{-\gamma}\int_{\Hei^n} \overline{g(\xi)} U(\xi,0) \,d\xi & = 2^{-\gamma}\int_{\Hei^n} \overline{P^\theta_\gamma h(\xi)} U(\xi,0) \,d\xi \\
& = - \frac{1}{\gamma} \frac{\Gamma(1+\gamma)}{\Gamma(1-\gamma)} \lim_{\epsilon\to 0} \epsilon^{1-\gamma} \int_{\Hei^n} \overline{\frac{\partial H}{\partial q}(\xi,\epsilon)} U(\xi,\epsilon) \,d\xi \\
&  \begin{split}=\frac{1}{\gamma}\
\frac{\Gamma(1+\gamma)}{\Gamma(1-\gamma)} \lim_{\epsilon\to 0} &
\iint_{\Hei^n \times (\epsilon,\infty)}
\bigg( q^{1-\gamma} \overline{\partial_q H} \partial_q U + q^{1-\gamma} \overline{\partial_t H}\partial_t U \\
& \hspace{1,2cm}+\frac14 q^{-\gamma} \sum_{j=1}^n \left(
\overline{X_j H} X_j U + \overline{Y_j H} Y_j U \right) \bigg)
d\xi\,dq \,.\end{split}
\end{split}\end{equation*}
By the Schwarz inequality,
\begin{equation*}
\begin{split}
&2^{-\gamma}\left|\int_{\Hei^n} \overline{g(\xi)} U(\xi,0) \,d\xi
\right|
 \leq \frac{1}{\gamma}\ \frac{\Gamma(1+\gamma)}{\Gamma(1-\gamma)} \\
 &\times \limsup_{\epsilon\to0}\left( \iint_{\Hei^n \times (\epsilon,\infty)} \left( q^{1-\gamma} |\partial_q H|^2 + q^{1-\gamma} |\partial_t H|^2 + \frac14 q^{-\gamma} \sum_{j=1}^n \left( |X_j H|^2 + |Y_j H|^2 \right) \right) d\xi\,dq \right)^{\frac{1}{2}} \\
& \times \limsup_{\epsilon\to0}\left( \iint_{\Hei^n \times
(\epsilon,\infty)} \left( q^{1-\gamma} |\partial_q U|^2 +
q^{1-\gamma} |\partial_t U|^2 + \frac14 q^{-\gamma} \sum_{j=1}^n
\left( |X_j U|^2 + |Y_j U|^2 \right) \right) d\xi\,dq
\right)^{\frac{1}{2}} .
\end{split}\end{equation*}
Next, by Proposition \ref{extequal},
\begin{align*}
& \limsup_{\epsilon\to 0} \iint_{\Hei^n \times (\epsilon,\infty)} \left( q^{1-\gamma} |\partial_q H|^2 + q^{1-\gamma} |\partial_t H|^2 + \frac14 q^{-\gamma} \sum_{j=1}^n \left( |X_j H|^2 + |Y_j H|^2 \right) \right) d\xi\,dq \\
& \qquad= 2^{\gamma-1}\mathcal A_\gamma[H] =
\gamma\frac{\Gamma(1-\gamma)}{\Gamma(1+\gamma)}\ a_\gamma[h] =
\gamma 2^{-\gamma}\frac{\Gamma(1-\gamma)}{\Gamma(1+\gamma)}\
\big(g,P_{-\gamma}^\theta g\big) \,.
\end{align*}
Thus, we have shown that
$$
\left|\int_{\Hei^n} \overline{g(\xi)} U(\xi,0) \,d\xi \right| \leq
\left( 2^{-1+\gamma}  \frac{\Gamma(1+\gamma)}{\gamma \
\Gamma(1-\gamma)} \right)^{1/2} a_{-\gamma}[g]^{1/2} \mathcal
A_\gamma[U]^{1/2} \,.
$$
By duality, this means that $U(\cdot,0)\in \dot S^\gamma(\Hei^n)$ with
$$
a_\gamma[U(\cdot,0)] \leq
2^{-1+\gamma}\frac{\Gamma(1+\gamma)}{\gamma \ \Gamma(1-\gamma)}
\mathcal A_\gamma[U] \,.
$$
This inequality, together with the density of $\mathcal C_0^\infty(\overline{\Omega_{n+1}})$ in $\dot{\mathcal{H}}^{1,\gamma}(\Omega_{n+1})$, implies the Theorem.
\end{proof}

It was conjectured by \cite{BrFoMo} that on the CR sphere we have the following conformally invariant sharp Sobolev inequality
\begin{equation}
\label{Sobolev-inequality}\|f\|^2_{L^{q^*}(S^{2n+1})}\leq C(n,\gamma) \fint_{S^{2n+1}} f \mathcal P_{\gamma}f,\quad \text{for}\quad  q^*=\frac{2Q}{Q-2\gamma},\quad Q=2(n+1),
\end{equation}
where $\mathcal P_\gamma$ are the CR fractional powers of the Laplacian on the sphere. The fact that this inequality is valid with some constant follows
from the work of Folland and Stein \cite{Folland-Stein:Heisenberg}.
In the remarkable work \cite{Jerison-Lee:extremals} Jerison and Lee
found the optimal constant in the case $\gamma=1$. The problem of
determining the sharp constant for general $\gamma$ was solved in
\cite{Frank-Lieb}. It is also shown that in the equivalent $\mathbb
H^n$ version of \eqref{Sobolev-inequality} all optimizers are
translates, dilates or  constant multiples of the function
$$H=\lp\frac{1}{(1+|z|^2)^2+t^2}\rp^{\frac{Q-2\gamma}{4}}.$$
Putting together \eqref{Sobolev-inequality} and Theorem \ref{thm2} we complete the proof of Corollary \ref{cor-embedding}.


\section{The Yamabe problem for the conformal fractional sub--Laplacian}\label{section-Yamabe}

Let $\mathcal X$ be a $m$-dimensional complex manifold with strictly
pseudoconvex boundary $\mathcal M$. Let $g^+$ be a K\"ahler metric
on $\mathcal X$ such that there exists a globally defined
approximate solution of the Monge-Amp\`ere equation that makes $g^+$
an approximate K\"ahler-Einstein metric, with $\theta$ as contact
form on $\mathcal M$, as described in Section \ref{section-ACH}.

Fix $\gamma\in(0,1)$. The fractional Yamabe problem asks to find a contact form $\hat \theta=f^{\frac{2}{m-\gamma}}\theta$ for some $f>0$ on $\mathcal M$ such that the fractional CR curvature $Q^{\hat \theta}_\gamma$ is constant. In PDE language, we need to find a positive solution $f$ of the nonlocal problem
$$P^{\theta}_\gamma (f)=c f^{\frac{m+\gamma}{m-\gamma}}, \quad\text{on }\mathcal M.$$
From the variational point of view, we are looking for minimizers of the functional
$$I_\gamma[f]=\frac{\int_{\mathcal M} f P_\gamma^{\theta}{f} \,\theta\wedge d\theta^n}{\lp\int_{\mathcal M} |f|^{2^*}\,\theta\wedge d\theta^n\rp^{\frac{2}{2^*}}},
$$
for $2^*=\frac{2m}{m-\gamma}$.
Motivated by the Riemannian case from \cite{Gonzalez-Qing}, one may find instead minimizers of the extension functional
$$\overline I_\gamma[u]=\frac{\int_{\mathcal X} q^{m-1}|\nabla u|_{g^+}^2\,dvol_{g^+}-s(m-s)\int_{\mathcal X} q^{m-1} u^2 \,dvol_{g^+}}
{\lp\int_{\mathcal M} |u|^{2^*}\,\theta\wedge
d\theta^n\rp^{2/2^*}}.$$ In particular, in the Heisenberg group case
we may rewrite the functional as
$$\overline I_\gamma[U]=\frac{\int_{\Omega_m} \left( q^{1-\gamma} |\partial_q U|^2 + q^{1-\gamma} |\partial_t U|^2 + \frac14 q^{-\gamma} \sum_{j=1}^n \left( |X_j U|^2 + |Y_j U|^2 \right) \right) \,d\zeta}
{\lp\int_{\mathbb H^n} |U|^{2^*}\,\theta\wedge d\theta^n\rp^{2/2^*}}.$$
for $u=q^{m-s}U$.

We define the CR $\gamma$-Yamabe constant as
$$\Lambda_\gamma(\mathcal M,[\theta])=\inf{\overline I_\gamma[U]}.$$
It is easy to show that
\begin{equation}\label{compare}\Lambda_\gamma(\mathcal M,[\theta])\leq \Lambda_\gamma(\mathbb H^n),\end{equation}
where the Heisenberg group is understood with its canonical contact form.

We conjecture that the fractional CR Yamabe problem is solvable if we have a strict inequality in \eqref{compare}, and that this is so unless we are already at the model case.  We hope to return to this problem elsewhere.

\bigskip

\section{Further studies}\label{Section-further}

After all this discussion on the complex hyperbolic space, it is natural to look now at the the quaternionic hyperbolic space $\mathcal H^m_{\mathbb Q}$. It can be characterized as a Siegel domain whose boundary is precisely the quaternionic Heisenberg group that, with some abuse of notation, will be denoted by  $\mathcal Q^n$.  It will become clear that both Theorems \ref{thm1} and \ref{thm2} are consequences of the rigid underlying structure of hyperbolic space.

\subsection{The quaternionic case}

A quaternion is an object of the form
$$q=x+yi+zj+wk,\quad x,y,z,w\in\mathbb R,$$
where the three imaginary units satisfy the multiplication rules
\begin{equation*}
\begin{split}
&i^2=j^2=k^2=-1,\\
&ij=-ji=k,\quad jk=-kj=i,\quad ki=-ik=j.
\end{split}
\end{equation*}
The set of quaternions, denoted by $\mathbb Q$, is a division ring. In particular, multiplication is still associative and every nonzero element has a unique inverse. The number $x$ is called the real part of $q$ while the three dimensional vector $yi+zj+wk$ is its imaginary part. Conjugation is the same as for complex numbers, indeed, $\bar q=x-yi-zj-wk$, and the modulus of $q$ is calculated as $|q|^2=q\bar q=x^2+y^2+z^2+w^2$.

We define the quaternionic Heisenberg group by $\mathcal Q^n:=\mathbb Q^n \times \im(\mathbb Q)$, with the group law
$$(\zeta_1,v_1)\circ(\zeta_2,v_2)=(\zeta_1+\zeta_2,v_1+v_2+2\im\ll \zeta_1,\zeta_2\gg),$$
where $\ll \zeta_1,\zeta_2\gg=\overline{\zeta_2}\zeta_1$ is the standard positive definite Hermitian form on $\mathbb Q^{n}$.
If we choose coordinates
$$(\zeta_i=x_i+iy_i+jz_i+kw_i)_{i=1}^{n}\quad \text{and}\quad v=iv_1+jv_2+kv_3$$
for the group $\mathcal Q^n$, then the following 1-form is a quaternionic contact form:
\begin{equation*}
\eta=\lp\begin{array}{l}
dv_1+2\sum(x_i dy_i-y_i dx_i+z_idw_i-w_idz_i)\\
dv_2+2\sum(x_i dz_i-z_i dx_i-y_idw_i+w_idy_i)\\
dv_3+2\sum(x_i dw_i-w_i dx_i+y_idz_i-z_idy_i)
\end{array}\rp.
\end{equation*}
Note that $d\eta$ is non-degenerate and the vector fields
\begin{equation*}
\begin{split}
X_i&=\frac{\partial}{\partial x_i}+2y_i\frac{\partial}{\partial v_1}+2z_i\frac{\partial}{\partial v_2}+2w_i\frac{\partial}{\partial v_3},\\
Y_i&=\frac{\partial}{\partial y_i}-2x_i\frac{\partial}{\partial v_1}-2w_i\frac{\partial}{\partial v_2}+2z_i\frac{\partial}{\partial v_3},\\
Z_i&=\frac{\partial}{\partial z_i}+2w_i\frac{\partial}{\partial v_1}-2x_i\frac{\partial}{\partial v_2}-2y_i\frac{\partial}{\partial v_3},\\
W_i&=\frac{\partial}{\partial w_i}-2z_i\frac{\partial}{\partial v_1}+2y_i\frac{\partial}{\partial v_2}-2x_i\frac{\partial}{\partial v_3}.
\end{split}
\end{equation*}
generate the kernel of $\eta$. Then $\{X_i,Y_i,Z_i,W_i\}_{i=1}^{n}$ generate a $4n$-dimensional distribution which is a contact structure on $\mathcal Q_n$.\\

In the paper \cite{Kim-Parker:quaternionic} the quaternionic hyperbolic space $\mathcal H^m_{\mathbb Q}$ of quaternionic dimension $m$ is characterized as a Siegel domain with boundary $\mathcal Q^n$; we give here the main details of this construction. Consider $\mathbb Q^{m,1}$, the quaternionic vector space of quaternionic dimension $m+1$ (so real dimension $4m+4$) with the quaternionic Hermitian form given by
$$\langle z,w\rangle=\overline w_1 z_{m+1}+\overline w_2 z_2+\ldots+\overline w_m z_m+\overline w_{m+1}z_1,$$
where $z$ and $w$ are the column vectors in $\mathbb Q^{m,1}$ with entries $z_1,\ldots,z_{m+1}$ and $w_1,\ldots,w_{m+1}$, respectively.
Consider the subspaces $V_-,V_0,V_+$ of $\mathbb Q^{m,1}$ given by
\begin{equation*}
\begin{split}
V_-&=\{z\in\mathbb Q^{m,1}\,:\, \langle z,z\rangle<0\},\\
V_0&=\{z\in\mathbb Q^{m,1}\backslash\{0\}\,:\, \langle z,z\rangle=0\},\\
V_+&=\{z\in\mathbb Q^{m,1}\,:\, \langle z,z\rangle>0\},\\
\end{split}
\end{equation*}
Define a right projection map $P$ from the subspace of $\mathbb Q^{m,1}$ consisting of those $z$ with $z_{m+1}\neq 0$ to $\mathbb Q^m$ by
\begin{equation*}
P:\left(\begin{array}{c}z_1\\ \vdots \\z_{m+1}\end{array}\right) \to
\left(\begin{array}{c}z_1 z_{m+1}^{-1}\\ \vdots \\z_{m}z_{m+1}^{-1}\end{array}\right).
\end{equation*}
The quaternionic hyperbolic $m$ space is defined as $\mathcal H^m_{\mathbb Q}:=PV_-\subset \mathbb Q^m$. This is a paraboloid in $\mathbb Q^m$, called the Siegel domain. The metric on $\mathcal H^m_{\mathbb Q}$ is defined by
\begin{equation*}
g^+=\frac{-4}{\langle z,z\rangle^2}\det
\lp \begin{array}{cc}\langle z,z\rangle & \langle dz,z\rangle\\
\langle z,dz\rangle & \langle dz,dz\rangle
\end{array}\rp.
\end{equation*}
The boundary of this Siegel domain consists of those points in $PV_0$ (defined for points $z$ in $V_0$ with $z_{m+1}\neq 0$) together with a distinguished point at infinity, which we denote $\infty$. The finite points on the boundary of $\mathcal H^m_{\mathbb Q}$ naturally carry the structure of the generalized Heisenberg group $\mathcal Q^n$.

As in the complex case just studied, we define horospherical coordinates on quaternionic hyperbolic space. To each point $(\zeta,v,u)\in \mathcal Q^n \times \mathbb R^+$ we associate a point $\psi(\zeta,v,u)\in V_-$. Similarly, $\infty$ and each point $(\zeta,v,0)\in\mathcal Q^n \times \{0\}$ is associated to a point in $V_0$ by $\psi$. The map $\psi$ is given by
$$\psi(\zeta,v,u)=\left[
\begin{array}{c}(-|\zeta|^2-u+v)/2 \\ \zeta \\ 1\end{array}\right]\quad \text{if } z\in \overline{\mathcal H}^m_{\mathbb Q}\backslash\{\infty\},\quad \psi(\infty)=
\left[\begin{array}{c}1\\0\\ \vdots\\0\end{array}\right].$$
With these coordinates, the metric on $\mathcal H^m_{\mathbb Q}$ may be written as
$$g^+=\frac{du^2+4u\ll d\zeta,d\zeta\gg+\eta^2}{u^2},$$
which maybe put into the more standard form \eqref{metric1} by means of the change $u=\rho^2$ modulo factors of 2 depending on the normalization.
Finally, the volume form is
$$dvol_{\mathcal H^m_{\mathbb Q}}=\frac{1}{u^{2n+2}}\,du\,dvol_{\mathcal Q^n}.$$

\subsection{Asymptotically hyperbolic metrics - a general formulation}

Here we follow the notation of the book \cite{Biquard:book}. It is
well known that the rank one symmetric spaces of non-compact type
are the real, complex and quaternionic hyperbolic spaces, and the
Cayley (octonionic) hyperbolic plane.  We denote them by $\mathcal
H^m_{\mathbb K}$, where $\mathbb K=\mathbb R$, $\mathbb C$, $\mathbb
Q$ (the quaternions) or $\mathbb O$ (the octonions). As homogeneous
spaces, we may write $\mathcal H^m_{\mathbb K}=G_0/G$, where $G_0$
is a real semisimple Lie group and $G$ a maximal compact subgroup,
that is the stabilizer of a particular point $*$; more specifically,
\begin{equation*}\begin{split}
&\mathcal H^m_{\mathbb R}=SO_{1,m} / SO_m,\quad \mathcal H^m_{\mathbb C}=SU_{1,m}/U_m,\\
&\mathcal H^m_{\mathbb Q}=Sp_{1,m}/ Sp_1Sp_m,\quad \mathcal H^2_{\mathbb O}=F_4^{-20}/Spin_9.
\end{split}\end{equation*}
%
If we denote
$$d=\dimension_{\mathbb R} \mathbb K,$$
then their real dimension is
$$\dimension_{\mathbb R} \mathcal H^m_{\mathbb K}=md,$$
i.e., $m$, $2m$, $4m$ and 16 for the real, complex, quaternionic and octonionic case, respectively.

Let $r$ be the distance to $*$ and denote by $\mathbb S_r$ the
sphere of radius $r$ centered at $*$. The metric $\kappa$ on the
boundary sphere $\mathbb S$ of the hyperbolic space $\mathcal
H_{\mathbb K}^{m}$ is defined as
$$\kappa=\lim_{r\to\infty} e^{-2r}g_{\mathbb S_r}.$$
The metric is infinite except on a distribution $V$ of codimension 1 (complex case), 3 (quaternionic case) or 7 (octonionic case). In the real case it is finite and $V=T\mathbb S$. The brackets of the vector fields in $V$ generate the whole tangent bundle $T\mathbb S$, making $\kappa$ into a Carnot-Carath\'eodory metric.

Moreover, there is a contact form $\eta$ with values in $\im \mathbb K$, such that the hyperbolic metric on $\mathcal H^{m}_{\mathbb K}$ is exactly
\begin{equation}\label{metric-general}g^+=dr^2+\sinh^2 (r) \kappa+\sinh^2(2r)\eta^2.\end{equation}
In the real case, the $\eta^2$ term does not appear. To give a sense
to the formula in the other three cases, we have to choose a
supplementary subspace to the distribution $V\subset T\mathbb S$.
This is given here by the fibers of the fibration
\begin{equation*}
\begin{CD}
\mathbb S^{d-1} @>>> \mathbb S\\
@. @VVV\\
 @. \mathbb K\mathbb P^{m-1}
\end{CD}
\end{equation*}
All this depends on the choice of the  base point $*$, but the conformal class $[\kappa]$ is well defined and will be called the conformal infinity of $g^+$. Note that $g^+$ has sectional curvature pinched between -4 and -1.

Note that if in \eqref{metric-general} we make the change of variables $\rho=e^{-r}$, then the metric becomes the more familiar
\begin{equation}\label{metric1}g^+=\frac{d\rho^2+\kappa}{\rho^2}+\frac{\eta^2}{\rho^4}.\end{equation}

One could generalize these definitions to give a notion of asymptotically $\mathbb K$-hyperbolic manifolds, whose metric behaves asymptotically at infinity as \eqref{metric-general}, but we will not pursue this end further.

\subsection{Scattering theory on $\mathcal H^m_{\mathbb Q}$}

Scattering theory on asymptotically $\mathbb K$-hyperbolic manifolds was developed in \cite{Biquard-Mazzeo,Mazzeo-Vasy:symmtric-spaces} (see also \cite{Carron-Pedon,Pedon:quaternionic} for the generalization to differential forms). Here we would like to show that in the case of hyperbolic space $\mathcal H^m_{\mathbb Q}$, the calculation of the conformal fractional Laplacian $P_\kappa^\eta$ from Theorem \ref{thm1} and the energy identity from Theorem \ref{thm2} are analogous.

Denote $m_0=2+4m$.  We calculate
$$\Delta_{+}=\rho^2\partial_{\rho\rho}-(1+4m)\rho\partial\rho+\rho^2\Delta_\kappa+\rho^4\Delta_\eta.$$
It is well known that the bottom of the spectrum for $-\Delta_+$ is $(m_0/2)^2=(1+2m)^2$. The scattering equation
\begin{equation}\label{scattering1}-\Delta_+ u-s(m_0-s)u=0,\quad\text{in }\mathcal H^m_{\mathbb Q}\end{equation}
has two indicial roots $s$ and $m_0-s$, and one seeks a solution
$$u=F\rho^{m_0-s}+G\rho^s, \quad F|_{\rho=0}=f.$$

Change variables $U=\rho^{m_0-s}u$, and set $s=\frac{m_0}{2}+\gamma$, $a=1-2\gamma$. Then equation \eqref{scattering1} becomes
\begin{equation*}
\left\{\begin{split}
\partial_{\rho\rho}U+ \frac{a}{\rho}\partial_{\rho}U+\Delta_\kappa U+\rho^2\Delta_\eta U&=0,\\
U&=f,
\end{split}\right.\end{equation*}
which is precisely \eqref{equation-CafSil}. From here we can easily
produce (quaternionic--conformal) fractional powers for the
sub--Laplacian $\Delta_\kappa$. We leave the details to the
interested reader.

Some final references: on harmonic analysis on semisimple Lie groups and symmetric spaces
see \cite{Cowling-Koranyi,Cowling:applications,Johnson-Wallach} and the books by Helgason
\cite{Helgason:libro,Helgason:libro2,Helgason:libro3}. On the
quaternionic Yamabe problem see \cite{Ivanov-Vassilev:book}. However, the
problem of finding extremals for fractional Sobolev embeddings in this setting is still an
open question.


\bigskip
\noindent {\bf Acknowledgements:} R.F. acknowledges financial
support from the NSF grants PHY-1068285 and PHY-1347399. M.G. was supported by Spain
Government grant MTM2011-27739-C04-01 and GenCat 2009SGR345. D.M.
was supported by GNAMPA project with title ``Equazioni differenziali con
invarianze in analisi globale'', by GNAMPA section ``Equazioni
differenziali e sistemi dinamici'' and by MIUR project ``Metodi
variazionali e topologici nello studio di fenomeni nonlineari''.
J.T. was supported by Chile Government grants Fondecyt \#1120105,
the Spain Government grant MTM2011-27739-C04-01 and Programa Basal,
CMM. U. de Chile.

\end{document}